\newcommand{\margnote}[1]{
\ifthenelse{\boolean{shownotes}}%
{\marginpar{\raggedright\tiny\texttt{#1}}}%
{}%
}
\newcommand{\hole}[1]{
\ifthenelse{\boolean{shownotes}}%
{ \begin{center}\fbox{ \rule {.25cm}{0cm}
\rule[-.1cm]{0cm}{.4cm} \parbox{.85\textwidth}{
\texttt{#1}} \rule {.25cm}{0cm}}\end{center}}
{}
}
\newtheorem{theorem}{Theorem}[section]
\newtheorem{proposition}[theorem]{Proposition}
\newtheorem{definition}[theorem]{Definition}
\newtheorem{lemma}[theorem]{Lemma}
\theoremstyle{remark}
\newtheorem{remark}[theorem]{Remark}
\newtheorem*{thm}{{\bf Lemma A}}
\DeclareMathOperator{\dive}{div}
\def\d{{\partial}}
\newcommand{\e}{\epsilon}	
\newcommand{\T}{\mathbb{T}^3}
\newcommand{\MT}{\mathcal{T}}
\newcommand{\rrho}{\sqrt{\rho}}
\newcommand{\R}{\mathbb{R}}
\newcommand{\wn}{w_{{\e}}}
\newcommand{\re}{\rho_{\e}}
\newcommand{\rre}{\sqrt{\rho_\e}}
\newcommand{\ue}{u_{\e}}
\newcommand{\weakto}{\rightharpoonup}
\newcommand{\weaktos}{\stackrel{*}{\rightharpoonup}}
\numberwithin{equation}{section}
\title[Relaxation from Quantum Navier--Stokes  to  Quantum Drift--Diffusion]{Relaxation limit from the Quantum Navier--Stokes equations to the Quantum Drift--Diffusion equation}
\author[P. Antonelli]{Paolo Antonelli}
\address[Paolo Antonelli]{GSSI-Gran Sasso Science Institute (Italy)}
\email{paolo.antonelli@gssi.it}
\author[G. Cianfarani Carnevale]{Giada Cianfarani Carnevale}
\address[Giada Cianfarani Carnevale]{Dipartimento di Ingegneria e Scienze dell'Informazione e Matematica, Universit\`a degli Studi dell'Aquila (Italy)}
\email{giada.cianfaranicarnevale@graduate.univaq.it}
\author[C. Lattanzio]{Corrado Lattanzio}
\address[Corrado Lattanzio]{Dipartimento di Ingegneria e Scienze dell'Informazione e Matematica, Universit\`a degli Studi dell'Aquila (Italy)}
\email{corrado@univaq.it} 
\author[S. Spirito]{Stefano Spirito}
\address[Stefano Spirito ]{Dipartimento di Ingegneria e Scienze dell'Informazione e Matematica, Universit\`a degli Studi dell'Aquila (Italy)}
\email{stefano.spirito@univaq.it}
\date\today
\begin{document}
\maketitle
\begin{abstract}
The relaxation-time limit from the Quantum-Navier-Stokes-Poisson system to the quantum drift-diffusion equation is performed in the framework of finite energy weak solutions. No assumptions on the limiting solution are made. The proof exploits the suitably scaled a priori bounds inferred by the energy and BD entropy estimates. Moreover, it is shown how from those estimates the Fisher entropy and free energy estimates associated to the diffusive evolution are recovered in the limit. As a byproduct, our main result also provides an alternative proof for the existence of finite energy weak solutions to the quantum drift-diffusion equation.
\end{abstract}
\section{Introduction}
This paper studies the relaxation-time limit for the Quantum Navier-Stokes-Poisson (QNSP) system with linear damping, towards the quantum drift-diffusion equation. More precisely, in the three dimensional torus $\mathbb T^3$, we consider a compressible, viscous fluid, whose dynamics is prescribed by
\begin{equation}\label{eq:qns_intro}
\begin{aligned} 
& \partial_t\rho + \dive(\rho u ) = 0 \\ 
& \partial_t(\rho u) + \dive (\rho u \otimes u) - \dive (\rho Du) + \nabla \rho^{\gamma}+ \rho \nabla V  = 2\rho\nabla \left( \frac{\Delta \sqrt{\rho}}{\sqrt{\rho} } \right) - \xi \rho u\\
&- \Delta V = \rho-g.
\end{aligned} 
\end{equation} 
Here the unknowns $\rho$, $u$, and $V$ denote the particle density, the velocity field, and   the electrostatic potential respectively. The function $g$ is given and represents the doping profile. \\
The system arises in the macroscopic description of electron transport in nanoscale semiconductor devices \cite{Jun_transp}, where quantum-mechanical effects must be taken into account. In this context the dissipative term $-\xi\rho u$ describes collisions between electrons and the
semiconductor crystal lattice (see, for instance, \cite{BW}), and $\tau=1/\xi$ is the relaxation time. 
The advantage of using macroscopic models for quantum fluids, with respect to kinetic models, is their reduced complexity, especially from a computational point of view \cite{MRS}. Moreover, hydrodynamic models correctly describe high field phenomena or submicronic devices.
However, in certain regimes, as in particular for low carrier densities and small electric fields, these models can be further reduced to some simpler ones. In the context of semiconductor devices for instance, quantum transport of electrons can be effectively described by the quantum drift-diffusion (QDD) equation \cite{VanR}, given by
\begin{equation}\label{eq:qdd_intro}
\begin{aligned}
&\d_t\rho + \dive \left(2\rho \nabla\left( \frac{\Delta \sqrt{ \rho}}{\sqrt{\rho}}\right) - \nabla \rho^{\gamma} -\rho\nabla V\right) = 0\\
&-\Delta V=\rho-g.
\end{aligned}
\end{equation}
The (QDD) equation can be formally recovered from system \eqref{eq:qns_intro} as a relaxation limit. Precisely,  by rescaling the time  as follows
\begin{equation}\label{eq:scaling}
t'=\epsilon t,\quad(\rho^\epsilon, u^\epsilon)(t', x)=\left(\rho, u\right)\left(\frac{t'}{\epsilon}, x\right),
\end{equation}
where $\epsilon:=1/\xi$, the scaled system reads
\begin{equation}\label{eq:Corrado}
\begin{aligned} 
& \partial_t\re + \frac{1}{\epsilon} \dive(\re\ue ) = 0 \\ 
& \partial_t(\re\ue) + \frac{1}{\epsilon} \dive (\re\ue \otimes \ue) - \frac{1}{\epsilon}\dive (\re D\ue) + \frac{1}{\epsilon} \nabla \re^{\gamma} + \frac{1}{\e} \re \nabla V_{\e}= \frac{1}{\epsilon}2\re\nabla \left( \frac{\Delta \rre}{ \rre} \right) - \frac{1}{\epsilon^2} \re\ue\\
&- \Delta V_{\e} = \re - g.
\end{aligned} 
\end{equation}
Thus in the limit $\epsilon\to0$, we formally obtain that 
\begin{equation}\label{eq:lim_mom}
\lim_{\epsilon\to0}\re\frac{\ue}{\epsilon}=2\rho\nabla\left(\frac{\Delta\sqrt{\rho}}{\sqrt{\rho}}\right)-\nabla\rho^\gamma- \rho\nabla V
\end{equation}
and therefore the (QDD) equation.

The main purpose of our paper is to rigorously prove the above limit, that is to prove that scaled finite energy weak solutions to \eqref{eq:qns_intro} converge to finite energy weak solutions to \eqref{eq:qdd_intro}. To this aim, in the following we shall refer to \eqref{eq:Corrado} with initial datum $(\rho^0,u^0)$ and doping profile $g$ possibly depending in a suitable way on the relaxation parameter $\e$ as well.
\begin{theorem}\label{teo:mainintro}
Let $(\re,\ue, V_{\e})$ be a weak solution of $\eqref{eq:qns_intro}$ in the sense of Definition \ref{WSQNS} with data $(\rho^0_\epsilon, u^0_\epsilon, g_\e)$ satisfying 
\begin{equation*}
\begin{aligned}
&\{\rho_\epsilon^0\}_{\e}\mbox{ is bounded in }L^1 \cap L^{\gamma} (\T)\ \hbox{such that}\ \rho_\epsilon^0 \to \rho^0\ \hbox{in}\ L^q(\T),\, q<3 \\
&\{\nabla \sqrt{\rho_\epsilon^0}\}_{\e}\mbox{ is bounded in }L^2(\T),\\
&\{\sqrt{\rho_\epsilon^0 }u_\epsilon^0\}_{\e}\mbox{ is bounded in }L^2(\T),\\
&\{g_{\e}\}_{\e}\mbox{ is bounded in }L^2(\T)\ \hbox{such that}\ g_{\e}\weakto g\ \hbox{in}\ L^2(\T).
\end{aligned}
\end{equation*}
	Then, up to subsequences, there exists $\rho\geq 0$ and $V$ such that 
	\begin{align*}
	&\rre\rightarrow \rrho\textrm{ strongly in }L^{2}((0,T);H^{1}(\T))\\
	& \nabla V_{\e} \rightarrow \nabla V \textrm{ strongly in } C([0,T); L^2(\T)),
	\end{align*}
	 and $(\rho, V)$ is a finite energy weak solution of $\rho$ of \eqref{eq:qdd_intro} with initial datum $\rho(0)=\rho^0$, in the sense of  Definition \ref{WSGF}. Namely there exist $\Lambda,\,\mathcal{S} \in L^{2}((0,T)\times\T)$ such that 
\begin{align}\label{eq:Lambda}
\rrho\Lambda&=2\dive (\rrho\nabla^2\rrho-\nabla\rrho\otimes\nabla\rrho-\rho^{\gamma}\mathbb{I})- \rho \nabla V \textrm{ in }\mathcal{D}'((0,T)\times\T)\\
\rrho\mathcal{S}&=2\rrho\nabla^2\rrho-2\nabla\rrho\otimes\nabla\rrho\,\mbox{ a.e. in }(0,T)\times\T\label{eq:S}
\end{align}
and $C>0$ such that for $a.e.$ $t\in(0,T)$
\begin{equation}\label{eq:fisher}
\int_{\T}\left(|\nabla\rrho|^2+\frac{\rho^{\gamma}}{\gamma-1}+ \frac{1}{2}|\nabla V|^2\right)(t)dx + \int_0^t \int_{\T} |\Lambda|^2dsdx \leq C
\end{equation} 
\begin{equation}\label{eq:free}
\begin{aligned}
\int_{\T}(\rho(\log\rho-1)+1)(t)dx& + \int_0^t \int_{\T}|\mathcal{S}|^2dsdx +\frac{4}{\gamma}\int_0^t \int_{\T}|\nabla\rho^{\frac{\gamma}{2}}|^2dsdx\\
& +\int_0^t \int_{\T} \rho(\rho-g)\;dsdx \leq \int_{\T}(\rho^0 (\log\rho^0 -1)+1)dx. 
\end{aligned}  
\end{equation}
Moreover, if in addition, the initial data also satisfy
\begin{equation*}
\begin{aligned}
&\sqrt{\rho_\e^0}\ue^0\to 0\mbox{ strongly in }L^{2}(\T)\\
&\nabla \sqrt{\rho_\e^0} \to \nabla\sqrt{\rho^0}\mbox{ strongly in }L^{2}(\T)\\
&\rho_\e^0 \to\rho^0\mbox{ strongly in } L^{\gamma}(\T), \\
\end{aligned}
\end{equation*}
then $\rho$ is an energy dissipating   weak solution, meaning that in addition to be a finite energy weak solution for a.e. $t\in(0,T)$ it holds
\begin{equation*}
\begin{aligned}
\int_{\T}\left(|\nabla\rrho|^2+\frac{\rho^{\gamma}}{\gamma-1}+ \frac{1}{2}|\nabla V|^2\right)(t)dx & +\int_0^t\int_{\T}|\Lambda|^2dsdx \leq \\
& \int_{\T}\,|\nabla\sqrt{\rho^0}|^2dx+\int_{\T}\frac{{(\rho^0)^\gamma}}{\gamma-1}dx+\int_{\T} \frac{1}{2}|\nabla V(0)|^2dx.
\end{aligned}
\end{equation*} 
\end{theorem}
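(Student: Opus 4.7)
The plan is to derive uniform-in-$\e$ bounds from the scaled energy and BD entropy inequalities associated to \eqref{eq:Corrado}, to extract strong compactness of $\rre$, and then to pass to the limit in the distributional formulations of the continuity and momentum equations. The scaling in \eqref{eq:Corrado} puts the relaxation dissipation at order $\e^{-2}$, so $\rre\ue$ vanishes at rate $\e$ and $\re\ue/\e$ is the natural quantity expected to have a finite nontrivial limit.

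Testing the momentum equation with $\ue$, combined with the BD entropy computation for QNSP, gives uniform bounds of the form
\begin{equation*}
\|\rre\|_{L^\infty_t H^1_x}+\|\re\|_{L^\infty_t L^\gamma_x}+\|\nabla V_\e\|_{L^\infty_t L^2_x}\le C,\qquad \|\rre\ue\|_{L^2_{t,x}}\le C\e,\qquad \|\rre D\ue\|_{L^2_{t,x}}\le C\sqrt\e,
\end{equation*}
plus an $L^2_tH^2_x$-bound on $\rre$ and log-integrability of $\re$. Writing $J_\e:=\re\ue/\e=\rre\cdot(\rre\ue/\e)$ shows that $J_\e$ is bounded in a suitable $L^2_tL^p_x$ space, while the convective and viscous stresses carry the small factors $\e^2$ and $\e$, respectively.

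From $\partial_t\re=-\dive J_\e$ and the uniform spatial bounds on $\rre$, an Aubin--Lions argument combined with the $L^2_tH^2_x$ control yields $\rre\to\rrho$ strongly in $L^2_tH^1_x$; elliptic regularity applied to the Poisson equation then gives $\nabla V_\e\to\nabla V$ strongly in $C_tL^2_x$. Weak compactness provides $J_\e\weakto J$, and a Fatou/lower-semicontinuity argument on $\rre\ue/\e$ produces $\Lambda\in L^2((0,T)\times\T)$ with $J=\rrho\Lambda$. To identify \eqref{eq:Lambda}, I multiply the momentum equation by $\e$: the terms $\e\partial_t(\re\ue)$, $\dive(\re\ue\otimes\ue)$ and $\dive(\re D\ue)$ become negligible in $\mathcal{D}'$ by the smallness noted above, while $\nabla\re^\gamma$ and $\re\nabla V_\e$ converge by the strong convergences already obtained, and the quantum term, recast as $2\dive(\rre\nabla^2\rre-\nabla\rre\otimes\nabla\rre)$, passes to the limit precisely because $\rre\to\rrho$ strongly in $L^2_tH^1_x$. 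Inserting this into the limiting continuity equation yields the weak formulation of \eqref{eq:qdd_intro}.

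The Fisher bound \eqref{eq:fisher} follows by lower semicontinuity in the QNSP energy inequality, with $\int|\Lambda|^2$ arising as the lim inf of $\e^{-2}\int\re|\ue|^2$; the energy-dissipating refinement under the stronger initial data is obtained because the additional strong convergences of $\rre^0\ue^0$, $\nabla\rre^0$ and $\re^0$ make the right-hand side of the energy inequality converge to its sharp value. The free energy estimate \eqref{eq:free} is extracted in the same spirit from the BD entropy, with $\int|\mathcal S|^2$ emerging from $\e^{-1}\int\re|D\ue|^2$ and $\mathcal S$ identified through \eqref{eq:S} using the strong $H^1_x$-convergence of $\rre$. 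The main obstacle, I expect, is twofold: securing enough time equicontinuity of $\re$ to run Aubin--Lions despite the $\e^{-1}$ factor in the continuity equation, and executing the lower-semicontinuity arguments that produce the squared limiting dissipations $|\Lambda|^2$ and $|\mathcal S|^2$ without any a priori structural information on the limit $\rho$, which is the feature that makes the statement unconditional.
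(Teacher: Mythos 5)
Your overall strategy coincides with the paper's: uniform bounds from the scaled energy and BD entropy inequalities, Aubin--Lions compactness for $\rho_\e$ (the worry you raise about the $\e^{-1}$ factor in the continuity equation is resolved exactly by the observation you yourself make, namely $\partial_t\rho_\e=-\dive\bigl(\sqrt{\rho_\e}\cdot\sqrt{\rho_\e}u_\e/\e\bigr)$ with $\sqrt{\rho_\e}u_\e/\e$ bounded in $L^2_{t,x}$, which bounds $\partial_t\rho_\e$ in $L^2_tW^{-1,3/2}_x$), strong convergence of $\sqrt{\rho_\e}$ in $L^2_tH^1_x$ via truncation plus interpolation against the $L^2_tH^2_x$ bound, passage to the limit in the momentum equation multiplied by $\e$, identification of $\Lambda$ as the weak $L^2$ limit of $\sqrt{\rho_\e}u_\e/\e$ combined with the strong convergence of $\sqrt{\rho_\e}$, and weak lower semicontinuity for the two entropy inequalities. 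All of this matches the paper.

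There is, however, one concrete error: you attribute the dissipation $\int_0^t\int_{\T}|\mathcal{S}|^2$ in the free-energy inequality \eqref{eq:free} to the viscous term $\e^{-1}\int\rho_\e|Du_\e|^2$. This cannot work. First, that term (weakly, $\e^{-1}\int|\MT_\e^s|^2$) sits in the \emph{energy} inequality, not in the BD entropy; in the BD entropy the viscous contribution is only the antisymmetric part $\int|\MT_\e^a|^2$, carrying no $\e^{-1}$ weight, and it is simply discarded in the limit. Second, by your own bound $\|\MT_\e^s\|_{L^2_{t,x}}\le C\sqrt{\e}$ the viscous stress converges strongly to zero, so any object extracted from it vanishes and cannot satisfy \eqref{eq:S}, whose right-hand side $2\rrho\nabla^2\rrho-2\nabla\rrho\otimes\nabla\rrho$ is in general nonzero. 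The correct source of $\mathcal{S}$ is the quantum capillarity dissipation: in the BD entropy computation the pairing $\rho_\e\nabla^2\log\rho_\e:\nabla w_\e$ produces $\int\rho_\e|\nabla^2\log\rho_\e|^2$, encoded weakly as $\int|\mathcal{S}_\e|^2$ with $\sqrt{\rho_\e}\,\mathcal{S}_\e=2\sqrt{\rho_\e}\nabla^2\sqrt{\rho_\e}-2\nabla\sqrt{\rho_\e}\otimes\nabla\sqrt{\rho_\e}$. One then extracts a weak $L^2$ limit $\mathcal{S}_\e\weakto\mathcal{S}$, identifies \eqref{eq:S} by combining the strong $L^2_tH^1_x$ convergence of $\sqrt{\rho_\e}$ with the weak $L^2_tH^2_x$ convergence, and obtains the $|\mathcal{S}|^2$ term in \eqref{eq:free} by weak lower semicontinuity of the $L^2$ norm applied to $\mathcal{S}_\e$. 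With this correction the remainder of your plan goes through as in the paper.
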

Let us notice that the estimates \eqref{eq:fisher} and \eqref{eq:free} yield the boundedness of the Fisher entropy and the free energy, respectively. On the other hand, the quantities $\Lambda$ and $\mathcal S$ characterized in \eqref{eq:Lambda} and \eqref{eq:S} provide the associated entropy dissipations, in a weaker sense than the estimates derived in \cite{GST} and \cite{JM}. Indeed, formally
\begin{equation}\label{eq:lams}
\begin{aligned}
\Lambda= 2\rrho\nabla\left(\frac{\Delta\sqrt{\rho}}{\sqrt{\rho}}\right)-\frac{\nabla\rho^\gamma}{\rrho},\qquad \mathcal S = \rrho\nabla^2\log\rho,
\end{aligned}
\end{equation}
but, due to the low regularity setting and the possible presence of vacuum regions it seems not possible to obtain the relations \eqref{eq:lams} in the limit, so the only available information we have is given by formulas \eqref{eq:Lambda} and \eqref{eq:S}. We refer to Remark \ref{rem:1} and Proposition \ref{enebdqddreg} below for more details on the tensor $\Lambda$ and $\mathcal S$.

An exhaustive list of all references concerning diffusive relaxation limits and asymptotic behavior for systems of conservation laws with friction, and in particular for hydrodynamic models for semiconductors, is beyond the interest of our presentation. For the theory of diffusive relaxation, we refer here to  \cite{DMTrans}, concerning in particular the case of multidimensional general semilinear systems, and the reference therein.  Moreover, concerning in particular the case of high friction limits with relative entropy techniques in the context of 
Korteweg theories \cite{GLT17}, we refer to  \cite{LT17,CCL}; see also \cite{LT13} for the case of Euler equations with friction.
Finally, for the particular case of Euler--Poisson models for semiconductors, we recall that the rigorous analysis of the diffusive
 relaxation limits  in the context of weak, entropic solutions  started with the seminal paper \cite{MN}, where the one dimensional case is treated using compensated compactness; see also  \cite{LM_rel,Latt} for the multi--$d$ case. 
 
Besides the modeling point of view, there are some other mathematical aspects which motivate our result.
First of all, the study of this singular limit is related to the asymptotic behavior of solutions to \eqref{eq:qns_intro} for large times. Let $(\rho^\star, u^\star)=(r, 0)$ be the stationary solution to \eqref{eq:qns_intro}, where, for $g$ constant, $r=\fint\rho = g$ is the mean value of the particle density. Then it can be shown that solutions to \eqref{eq:qns_intro} exponentially converge towards $(\rho^\star, u^\star)$ as $t\to\infty$, see \cite{GJT} for the one-dimensional problem (with suitable boundary conditions, see also \cite{LZ} for some extensions) and \cite{BGLVV} for the proof of this result in the framework of finite energy weak solutions in the three dimensional torus. 

On the other hand, it is also interesting to determine the asymptotic dynamics which governs the exponential convergence to equilibrium. This is indeed achieved by performing the scaling in \eqref{eq:scaling}, hence the (QDD) equation \eqref{eq:qdd_intro} also gives the asymptotic dynamics we are interested in.

On a related subject, let us also comment on the inviscid counterpart of system \eqref{eq:qns_intro}, namely the quantum hydrodynamic (QHD) system \cite{AM, AM_CM}. Due to the dissipative term $-\xi\rho u$, also in this case it is possible to show both the exponential convergence towards the stationary solution \cite{HLM, HLMO} and the relaxation limit \cite{JLM_rel}, again towards the (QDD) equation. However the only available results here deal with small, regular perturbations around stationary solutions. This can be seen as due to the lack of regularizing effect of the viscosity, by means of the BD entropy estimates.

Notice that in Theorem \ref{teo:mainintro} the only assumption needed is the initial energy associated to the system \eqref{eq:qns_intro} to be uniformly bounded at the initial time. In particular no assumptions on the limiting solution to \eqref{eq:qdd_intro} are given. Consequently, as a byproduct our main Theorem also provides an alternative proof for the existence of finite energy weak solutions to \eqref{eq:qdd_intro}, see \cite{JM} and \cite{GST}. Furthermore, in the proof of our main result it is possible to see how the energy and BD entropy estimates, respectively, associated to \eqref{eq:qns_intro} yield, in the limit $\varepsilon\to0$, the Fisher and free energy, respectively, associated to \eqref{eq:qdd_intro}. Those facts were already noticed, in a similar context, in the recent preprint \cite{Blub}, see also \cite{BlubCR}. More precisely, the Authors in \cite{Blub} consider the one dimensional shallow water equations with a nonlinear damping term. By using a similar scaling as in \eqref{eq:scaling}, the authors study the convergence towards a lubrication type model. In particular in \cite{Blub} the Authors emphasize how the BD entropy for the hydrodynamical system converge towards the so called Bernis-Friedman \cite{BF} entropy, associated to the limiting diffusive equation. 

Let us remark that also in the context of semiconductor device modeling it would make sense to consider a nonlinear damping term, as in \cite{Blub}. Indeed this would correspond to the case when the relaxation time $\tau$ is no longer a constant, but a function of the particle density. This is consistent with the derivation of the hydrodynamic system from kinetic theories, as in general the relaxation coefficient may depend on the particle density. 
Finally, our result can also be seen as related to the derivation of \eqref{eq:qns_intro} and \eqref{eq:qdd_intro} from kinetic equations. These macroscopic models for quantum transport are usually derived from collisional Wigner-type equations, with a suitable choice of the collision operator, see  \cite{DR03,DMR05} and \cite{Jun_rev} for a more comprehensive discussion about those issues. In particular, the QNS system with a linear damping was derived in \cite{JM_fQNS} by applying the moment method to a Wigner-type equation whose collisional operator is chosen to be the sum of a BGK and a Caldeira-Leggett-type operator, see also \cite{JLM_WFP} where an alternative derivation is given by avoiding the Chapman-Enskog expansion. Actually in \cite{JM_fQNS} and \cite{JLM_WFP} the authors derive the full QNS system, where also the dynamics of the energy density is given, in our paper we only consider the isentropic dynamics given by \eqref{eq:qns_intro}. We also mention \cite{BM} where the QNS system without damping is derived.
On the other hand the QDD equation can also be derived from the same Wigner-type equation by using a diffusive scaling. 
In this sense our result, obtained by using the scaling \eqref{eq:scaling}, can be seen as linking the two different scalings used to derive \eqref{eq:qns_intro} and \eqref{eq:qdd_intro} directly from kinetic models.

\subsection*{Organization of the paper}

The paper is organized as follows. In Section \ref{sec:2} we give the definition of weak solution for the Quantum-Navier-Stokes system and the Quantum-Drift-Diffusion equation, we give a formal proof of the $\e$-independent estimates and we state the main theorem and in Section \ref{sec:3} we prove the main result of the paper.
\subsection*{Notations}
We denote with $L^p(\mathbb{T}^n)$ the standard Lebesgue spaces. The Sobolev space of $L^p$ functions with $k$ distributional derivatives in $L^p$ is denoted $W^{k,p}$, in the case $p=2$ we write $H^k(\mathbb{T}^n)$. The spaces $W^{-k,p}$ and $H^{-k}$ denote the dual spaces of  $W^{k,p'}$ and $H^k$ where $p'$ is the usual H\"{o}lder conjugate of p. Given a Banach space $B$, the classical Bochner space of real valued function with values in $B$ is denote by $L^{p}(0,T;B)$ and sometimes also the abbreviation $L^{p}_t(B_x)$ will be used. Given a function $f \in L^p(\T)$ we denote the average of $f$ alternatively $\fint f$ or $\bar f$, and 
throughout the paper we can assume without loss of generality that $|\T|=1$. We denote with $Du = (\nabla u + \nabla u^t)/2$ the symmetric part of the Jacobian matrix $\nabla u$ and with $Au = (\nabla u -\nabla u^t)/2$ the antisymmetric part. Finally, the subscript $\e$ used to denote sequences of functions has to be always understood running over a countable set.

\section{Definition of Weak Solutions and Main Result}\label{sec:2}
In this section we give the definition of weak solutions for the system \eqref{qns} and \eqref{eq:qdd_intro}. The existence of such solutions is out of the main aims of this paper;  we underline here this result can be proved  via a compactness argument  as done in \cite{LV, AS1} with minor modifications; see also \cite{AS,AS3,ASnew}. However, it is worth observing that with this technique, thanks to the particular choice of the   approximating sequence,  the constructed weak solutions verify various appropriate bounds, instrumental for our convergence result. For this reason,  we shall assume these bounds are valid in our framework; see Definition \ref{WSQNS} and Remarks \ref{rem:1} and \ref{rem:2} below.

\subsection{Weak solutions of the Quantum-Navier-Stokes-Poisson equations}
Let us consider the following system in $(0,T)\times\T$ for a given  $g_{\e}: \T \rightarrow \R$,
\begin{equation}\label{qns}
\begin{aligned} 
& \partial_t\re + \frac{1}{\epsilon} \dive(\re\ue ) = 0 \\ 
& \partial_t(\re\ue) + \frac{1}{\epsilon} \dive (\re\ue \otimes \ue) - \frac{1}{\epsilon}\dive (\re D\ue) + \frac{1}{\epsilon} \nabla \re^{\gamma} + \frac{1}{\e} \re \nabla V_{\e}= \frac{1}{\epsilon}2\re\nabla \left( \frac{\Delta \rre}{ \rre} \right) - \frac{1}{\epsilon^2} \re\ue\\
&- \Delta V_{\e} = \re - g_{\e}
\end{aligned} 
\end{equation} 
with initial data 
\begin{equation}\label{eq:id}
\begin{split}
& \rho_{\epsilon} (0,x) = \rho^0_{\epsilon}(x), \\
& \rho_{\epsilon}(0,x) u_{\epsilon}(0,x) = \rho^0_{\epsilon}(x) u^0_{\epsilon}(x),
\end{split}
\end{equation}
on $ \{t=0\}\times \mathbb{T}^3$ and zero average condition for $V_\e$, namely
\begin{equation}\label{meanV}
\fint_{\T} V_\e(x,t) dx = 0. 
\end{equation}
We emphasize that in order to solve \eqref{qns}$_3$,  \eqref{meanV} we need the  following assumption on the doping profile  $g_\e$:  
$\fint_{\T} g_\e(x) dx = M_\e$ and $M_\e > 0$ where 
$M_\e:=\fint_{\T} \re^0(x)$; see also Remark \ref{rem:4} below.

The definition of weak solution is the following.
\begin{definition}\label{WSQNS}
	Given $\rho^0_{\epsilon}$ positive and such that $\rrho^0_{\epsilon}\in H^{1}(\T)$ and $\rho^0_{\epsilon}\in L^{\gamma}(\T)$, $g_\e \in L^2(\T)$ and $u^0_{\epsilon}$ such that $\rrho^0_{\epsilon}u^0_{\epsilon}\in L^{2}(\T)$, then $(\re, \ue,  V_{\e})$ with $\re\geq0$ and $V_\e$ with zero average is a weak solution of the Cauchy problem \eqref{qns}-\eqref{eq:id} if the following conditions are satisfied. 
	\begin{enumerate}
		\item Integrability conditions:
		\begin{align*}
		&\rre\in L^{\infty}(0,T;H^{1}(\T))\cap L^{2}(0,T;H^2(\T)),
		&\rre\ue\in L^{\infty}(0,T;L^{2}(\T)),\\
		&\re^{\gamma}\in L^{\infty}(0,T;L^1(\T)),
		&\MT_{\e}\in L^{2}(0,T;L^{2}(\T)), \\
		&\re\in C(0,T;L^2(\T)), & V_{\e} \in C(0,T;H^{2}(\T)).
		\end{align*}
		\item Continuity equation:\\
		For any $\phi\in C^{\infty}([0,T)\times\T;\R)$
		\begin{equation}\label{eq:wfce}
		\int\re^0\phi(0)dx+\iint\re\phi_t+\frac{1}{\e}\rre\rre \ue\nabla\phi \; dsdx=0.
		\end{equation}
		\item Momentum equation:\\
		For any $\psi\in  C^{\infty}([0,T)\times\T;\R^3)$
		\begin{equation}\label{eq:wfme}
		\begin{aligned}
		&\int\re^0\ue^{0}\psi(0)dx +\iint\rre(\rre \ue)\psi_t \;dsdx + \frac{1}{\e}\iint\rre \ue\rre \ue\cdot\nabla\psi \; dsdx\\
		&-\frac{1}{\e}\iint\rre\MT_{\e}^{s}:\nabla\psi \; dsdx+ \frac{1}{\e}\iint\re^{\gamma}\dive\psi \; dsdx - \frac{1}{\e}\iint \re \nabla V_{\e} \psi \; dsdx \\
		&-\frac{1}{\e}\iint 2 \rre\,\nabla^2\rre:\nabla\psi \; dsdx+ \frac{1}{\e}\iint 2 \nabla\rre\otimes\nabla\rre\,:\nabla\psi \;dsdx +\frac{1}{\e^{2}}\iint\re\ue\psi\; dsdx =0.
		\end{aligned}
		\end{equation}
		\item Poisson equation:\\
		For a.e. $(t,x)\in (0,T)\times\T$ it holds that 
		\begin{equation}\label{eq:poisson}
		-\Delta V_{\e}=\re-g_{\e}.
		\end{equation}
		\item Energy dissipation:\\
		For any $\varphi\in C^{\infty}([0,T)\times\T;\R)$
		\begin{equation}\label{eq:dissor}
		\iint\rre\MT_{\e,i,j}\varphi \; dsdx =-\iint\re u_{\e,i}\nabla_{j}\varphi \; dsdx -\iint2\rre u_{\e,i}\otimes\nabla_{j}\rre\,\varphi  \;dsdx.
		\end{equation}
		\item Energy Inequality:\\
		For a.e. $t\in(0,T)$
		\begin{equation}\label{fes-1}
		\begin{split}
		&\int_{\T} \left(\frac{1}{2} \re|\ue|^2 + \frac{\re^\gamma}{\gamma - 1}+2 |\nabla \sqrt{\re}|^2+ \frac{1}{2} |\nabla V_{\e}|^2 \right)(t) dx +\frac{1}{\epsilon} \int_0^t\int_{\T} |\MT^s_{\e}|^2 dsxd +\frac{1}{\epsilon^2}\int_0^t\int_{\T} \re|\ue|^2 dsdx \\
		& \leq \int_{\T} \left(  \frac{1}{2} \re^0|\ue^0|^2 + \frac{({\re^{0})}^\gamma}{\gamma - 1} +  2 |\nabla \sqrt{\re^0}|^2 +\frac{1}{2} |\nabla V_{\e}(0)|^2\,\right)dx.
		\end{split}
		\end{equation}
		\item BD Entropy: Let $w_\epsilon=u_\epsilon+\nabla\log\rho_\epsilon$,  then there exists a tensor $\mathcal{S}_{\epsilon}\in L^{2}(0,T;L^{2}(\T))$ such that 
\begin{equation}\label{eq:dispdiss}
\rre\mathcal{S}_{\epsilon}=2\rre\nabla^2\rre-2\nabla\rre\otimes\nabla\rre\,\mbox{ a.e. in }(0,T)\times\T	
\end{equation}		
we have for a.e. $t\in(0,T)$
		\begin{align}
		&\nonumber\int_{\T}\left(\frac{1}{2}\epsilon \re |\wn|^2 + \epsilon\frac{\re^{\gamma}}{\gamma - 1}+\epsilon 2|\nabla \sqrt{\re}|^2 + \frac{1}{2}\e |\nabla V_{\e}|^2 + (\re(\log\re -1)+1)\right)(t) dx\\
		&  +\int_0^t\int_{\T} |\MT_{\e}^{a}|^2 \; dsdx +\int_0^t\int_{\T}|\mathcal{S}_{\epsilon}|^2 \; dsdx +\frac{4}{\gamma}\int_0^t\int_{\T}|\nabla \re^{\frac{\gamma}{2}}|^2 \; dsdx + \frac{1}{\epsilon}\int_0^t\int_{\T} \re |\ue|^2 \; dsdx \nonumber\\
		& \nonumber +\int_0^t\int_{\T}  \re(\re- g_\e) \;dsdx 
		\leq \\
		& \label{newbdestimate1} \int_{\T} \left( \frac{1}{2}\epsilon \re^0 |\wn^0|^2 + \epsilon \frac{({\re^{0}})^{\gamma}}{\gamma - 1} + \epsilon 2|\nabla \sqrt{\re^0}|^2 + + \frac{1}{2}\e |\nabla V_{\e}(0)|^2+ (\re^0(\log \re^0 -1) + 1 ) \right)dx.
\end{align}
\item There exists an absolute constant $C$ such that 
\begin{equation}\label{eq:jung}
\begin{aligned}
\int_0^t \int_{\T} |\nabla^2\rre|^2+|\nabla\re^{\frac{1}{4}}|^4& dsdx\leq C\int_{\T} \left( \frac{1}{2}\epsilon \re^0 |\wn^0|^2 + \epsilon \frac{(\rho_{\e}^0)^{\gamma}}{\gamma - 1}+\epsilon 2|\nabla \sqrt{\re^0}|^2 \right)dx\\
&+C\int_{\T} \left(\frac{1}{2}\e |\nabla V_{\e}(0)|^2+ (\re^0(\log \re^0 -1) + 1 ) \right)dx
\end{aligned}
\end{equation}
\end{enumerate}
\end{definition}

The following remarks aim at explaining some peculiar points of the definition of weak solutions. In particular, we explain   the presence of the tensors $\MT_{\e}$ and $\mathcal{S}_{\epsilon}$ in the following remarks, and the reason why we must further assume the bound \eqref{eq:jung}.


\begin{remark}[\emph{Weak formulation of the quantum term}]\label{rem:3orderlimit}
We emphasize that in the weak formulation introduced above, the third-order term in the momentum equation can be written in different ways:
\begin{equation}\label{eq:dispide}
2\rho \nabla\left( \frac{\Delta \sqrt{ \rho}}{\sqrt{\rho}}\right)= \dive (\rho \nabla^2 \log \rho) = \dive ( 2 \sqrt{\rho} \nabla^2 \sqrt{\rho} - 2 \nabla\sqrt{\rho}\otimes \nabla \sqrt{\rho}),
\end{equation} 
and we are using the last expression to give a distributional meaning to the third-order tensor term. 
\end{remark}
\begin{remark}[\emph{The velocity field and the vacuum set}]
We stress that in the definition of weak solutions the vacuum region can be of positive measure. As a consequence, the velocity it is not uniquely defined in the vacuum set, namely if we change its value on $\{\rho=0\}$ we would still have the same weak solution of \eqref{qns}-\eqref{eq:id}. Moreover, even   if we choose $u=0$ on $\{\rho=0\}$ we can not deduce any a priori bound in any Lebesgue space.
\end{remark}

\begin{remark}[\emph{About the tensors $\MT_{\e}$ and $\mathcal{S}_{\epsilon}$}]\label{rem:1}

The presence of the tensor $\MT_{\e}$ is   due to the possible presence of vacuum regions. Indeed, if the density is bounded away from zero, \eqref{eq:dissor} implies that $\MT_{\e}=\rre\nabla\,\ue$. On the other hand, even in the case when the vacuum has zero Lebesgue measure, $\nabla \ue$ also can not be defined in a distributional sense. Therefore the tensor $\MT_{\e}$ arises as a weak $L^{2}$-limit of the sequence $\{\sqrt{\rho^n}\nabla\ue^n)\}_{n}$, where $\{\re^n,\ue^n\}_n$ is a suitable sequence of approximations, see \cite{LV, AS1}. Analogously,   $\mathcal{S}_{\epsilon}$ is again motivated   by the presence of vacuum regions. Indeed, also in this case, if the density is bounded away from zero, by using the identity \eqref{eq:dispide}, we have that $\mathcal{S}_{\epsilon}=\rre\nabla^2\log\re$. As for $\MT_{\epsilon}$, the tensor $\mathcal{S}_{\e}$ arises as an $L^2$ weak limit of the sequence $\{\sqrt{\rho^n}\nabla^2\log\re^n\}_{n}$, where $\{\re^n,\ue^n\}_n$ is a suitable sequence of approximations. Notice that the fact that $\{\sqrt{\rho^n}\nabla^2\log\re^n\}_{n}$ is bounded in $L^{2}((0,T)\times\T)$ because of the BD Entropy, see Proposition \ref{prop:energybd}. 
\end{remark}
\begin{remark}[\emph{About the inequality \eqref{eq:jung} }]\label{rem:2} 
Regarding \eqref{eq:jung} we recall the following inequality proved in \cite{JM} : There exists $C>0$ depending only on the dimension, such that for any function $\rho\geq 0$ with $\rrho\in H^{2}(\T)$
\begin{equation}\label{nablalogrho}
\iint|\nabla\rho^{\frac{1}{4}}|^4 \; dsdx +\iint|\nabla^2\rrho|^2 \; dsdx \leq C \iint\rho|\nabla^2\log\rho|^2 \; dsdx.
\end{equation}
Therefore, \eqref{eq:jung} follows by applying \eqref{nablalogrho} to some approximation $\re^n$, for which the quantity $\sqrt{\rho^n}\nabla^2\log\re^n$ is well-defined, using the bound on $\sqrt{\rho^n}\nabla^2\log\re^n$ inferred from the BD Entropy and using the weak lower-semicontinuity of the norms. For completeness we give the proof of \eqref{eq:jung} in the appendix.
\end{remark}
\begin{remark}[\emph{About the integrability conditions and the Poisson equation}]\label{rem:4}
We notice that by using the integrability hypothesis and \eqref{eq:wfce} the average of the density is conserved, namely, $\fint_{\T} \re(t,x)\,dx=M_{\e}$ for any $t\in(0,T)$. Therefore, $\re-g$ has zero average for any $t\in(0,T)$ and then the compatibility for the Poisson equation $\eqref{qns}_3$ is always satisfied. \\ Next, from the mimimun regularity required for $\rho_\e$ to be a weak, finite energy solution, we readily obtain $\re\in C([0,T];L^{2}(\T))$; this (redundant) condition is thus listed explicitly in Definition \ref{WSQNS}. Hence $\re|_{t=0}=\re^{0}$ in $L^{2}(\T)$ and we have that $V_{\e}(0)$ is well-defined and coincides with the solution of the corresponding elliptic equation  at $t=0$, that is:
\begin{equation*}
-\Delta\,V_\e(0)=\re^0-g_{\e}.
\end{equation*}
Finally,  since $\re^0$ and $g_{\e}$ are both bounded in $L^{2}(\T)$ we have that $V_{\e}|_{t=0} = V_\e(0)$ is bounded in $H^{2}(\T)$.
\end{remark}

Weak solutions of \eqref{qns}-\eqref{eq:id} can be obtained by using the same approximation argument of \cite{LV} with minor changes. In particular, being the proof of the existence based on a compactness argument, \eqref{fes-1} and \eqref{newbdestimate1} should be proved for suitable approximate solutions and then would be obtained trough a limiting argument. On the other hand, as usual in PDE they can be motivated by formal estimate. This is exactly the content of the next proposition.  
\begin{proposition}\label{prop:energybd}
	Let  $\e>0$ and $(\re,\ue, V_{\e})$ be a sequence of smooth solutions of \eqref{qns} with initial data \eqref{eq:id}. Define 
	$\wn=\ue+\nabla\log\re$. Then, for any $t\in[0,T)$ the pair $(\re, \ue, V_\e)$ satisfies the following estimates:  
	\begin{enumerate}
		\item (Energy Estimate)
		\begin{equation}\label{fes}
		\begin{aligned}
		&\int_{\T} \left(\frac{1}{2} \re|\ue|^2 + \frac{\re^\gamma}{\gamma - 1}+2 |\nabla \sqrt{\re}|^2+ \frac{1}{2}|\nabla V_{\e}|^2\right)(t) dx +\frac{1}{\epsilon} \int_0^t\int_{\T} \re |D\ue|^2 \; dsdx +\frac{1}{\epsilon^2}\int_{0}^{t}\int_{\T} \re|\ue|^2\; dsdx  \\
	    & = \int_{\T} \left(  \frac{1}{2} \re^0|\ue^0|^2 + \frac{({\re^{0})}^{\gamma}}{\gamma - 1} +  2 |\nabla \sqrt{\re^0}|^2+ \frac{1}{2}|\nabla V_{\e}(0)|^2 \right)dx.
		\end{aligned}
		\end{equation}
		\item (BD Entropy)
		 \begin{equation}\label{newbdestimate}
	       \begin{aligned}
	    &\int_{\T} \left(\frac{1}{2}\epsilon \re |\wn|^2 + \epsilon\frac{\re^{\gamma}}{\gamma - 1}+\epsilon 2|\nabla \sqrt{\re}|^2 + \e \frac{1}{2}|\nabla V_{\e}|^2 + (\re(\log\re -1)+1)\right)(t)dx+
	    \int_0^t\int_{\T} \re |A\ue|^2\; dsdx  \\
	    &+ \int_0^t\int_{\T} \re |\nabla^2 \log \re|^2 \; dsdx + \frac{4}{\gamma} \int_0^t\int_{\T} |\nabla \re^{\frac{\gamma}{2}}|^2\; dsdx + \int_0^t\int_{\T} \re(\re - g_\e) \;dsdx  + \frac{1}{\epsilon} \int_0^t\int_{\T} \re |\ue|^2 \; dsdx \\
	    & = \int_{\T} \left( \frac{1}{2}\epsilon \re^0 |\wn^0|^2 + \epsilon \frac{({\re^{0}})^{\gamma}}{\gamma - 1} + \epsilon 2|\nabla \sqrt{\re^0}|^2 + \frac{1}{2}\e|\nabla V_{\e}(0)|^2+ (\re^0(\log \re^0 -1) + 1 ) \right)dx.
	      \end{aligned} 
	    \end{equation}
	\end{enumerate}
\end{proposition}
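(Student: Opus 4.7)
For the energy estimate \eqref{fes}, I would multiply the momentum equation \eqref{qns}$_2$ by $\ue$ and integrate over $\T$. Using the continuity equation \eqref{qns}$_1$, the inertial terms combine into $\frac{d}{dt}\int\tfrac{1}{2}\re|\ue|^2\,dx$; one integration by parts gives the viscous dissipation $\tfrac{1}{\epsilon}\int\re|D\ue|^2\,dx$; the pressure reduces to $\frac{d}{dt}\int\re^\gamma/(\gamma-1)\,dx$ after a further use of continuity; and the friction contributes $\tfrac{1}{\epsilon^2}\int\re|\ue|^2\,dx$. The quantum term, rewritten in divergence form via \eqref{eq:dispide}, produces $\frac{d}{dt}\int 2|\nabla\rre|^2\,dx$ after two integrations by parts and an application of continuity. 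For the Poisson contribution, using \eqref{eq:poisson} and the time-independence of $g_\epsilon$, one computes $\tfrac{1}{\epsilon}\int\re\ue\cdot\nabla V_\epsilon\,dx = -\tfrac{1}{\epsilon}\int V_\epsilon\dive(\re\ue)\,dx = \int V_\epsilon\partial_t\re\,dx = -\int V_\epsilon\partial_t\Delta V_\epsilon\,dx = \frac{d}{dt}\int\tfrac{1}{2}|\nabla V_\epsilon|^2\,dx$. Integrating in time then yields \eqref{fes}.

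For the BD entropy \eqref{newbdestimate}, I would work with the Bresch--Desjardins effective velocity $\wn = \ue + \nabla\log\re$, relying on the algebraic identity $\tfrac{1}{2}\re|\wn|^2 = \tfrac{1}{2}\re|\ue|^2 + \ue\cdot\nabla\re + 2|\nabla\rre|^2$. The strategy is to derive a ``cross identity'' by testing the momentum equation against $\epsilon\nabla\log\re$ and integrating, and then combine it with $\epsilon$ times the energy identity. After integration by parts, the new contributions produced by the cross identity are: the pressure $\int\nabla\re^\gamma\cdot\nabla\log\re\,dx = \tfrac{4}{\gamma}\int|\nabla\re^{\gamma/2}|^2\,dx$; the Poisson contribution $\int\re\nabla V_\epsilon\cdot\nabla\log\re\,dx = \int\nabla V_\epsilon\cdot\nabla\re\,dx = \int\re(\re-g_\epsilon)\,dx$, using \eqref{eq:poisson} together with $-\Delta V_\epsilon = \re - g_\epsilon$; the quantum term $-\int\dive(\re\nabla^2\log\re)\cdot\nabla\log\re\,dx = \int\re|\nabla^2\log\re|^2\,dx$; the friction $\tfrac{1}{\epsilon}\int\ue\cdot\nabla\re\,dx = \frac{d}{dt}\int(\re\log\re - \re + 1)\,dx$, from continuity; and the viscous cross term $-\int\dive(\re D\ue)\cdot\nabla\log\re\,dx = \int\re D\ue:\nabla^2\log\re\,dx$. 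By a short computation using continuity, the last of these equals $2\epsilon\frac{d}{dt}\int|\nabla\rre|^2\,dx$, which accounts precisely for the extra $2\epsilon|\nabla\rre|^2$ present in the BD functional on top of what is already contained in $\tfrac{1}{2}\epsilon\re|\wn|^2$.

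The main obstacle is the algebraic rearrangement of the time-derivative and convection parts of the cross identity. Computing $\epsilon\int\partial_t(\re\ue)\cdot\nabla\log\re\,dx + \int\dive(\re\ue\otimes\ue)\cdot\nabla\log\re\,dx$ by repeated use of integration by parts and continuity, one obtains $\epsilon\frac{d}{dt}\int\ue\cdot\nabla\re\,dx - \int\re(\dive\ue + \ue\cdot\nabla\log\re)^2\,dx - \int\re(\ue\otimes\ue):\nabla^2\log\re\,dx$. The Bresch--Desjardins algebraic identity $\int\re|D\ue|^2\,dx - \int\re(\dive\ue + \ue\cdot\nabla\log\re)^2\,dx - \int\re(\ue\otimes\ue):\nabla^2\log\re\,dx = \int\re|A\ue|^2\,dx$, verifiable via a further sequence of integrations by parts exploiting the decomposition $\nabla\ue = D\ue + A\ue$ and the symmetry of $\nabla^2\log\re$, then combines with $\int\re|D\ue|^2\,dx$ coming from $\epsilon\cdot$\eqref{fes} to produce the desired positive dissipation $\int\re|A\ue|^2\,dx$. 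Gathering all the pieces and integrating in time gives \eqref{newbdestimate}.
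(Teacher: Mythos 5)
Your proposal is correct, and the two halves sit differently relative to the paper. For the energy identity \eqref{fes} you follow essentially the same route as the paper: test the momentum equation with $\ue$, use the continuity equation for the kinetic and pressure parts, the identity \eqref{eq:dispide} together with the evolution of $\nabla\log\re$ for the quantum term, and the Poisson equation for the electrostatic energy — this is exactly the combination of \eqref{es}, \eqref{potential}, \eqref{press} and \eqref{effqns}. For the BD entropy, however, you take a genuinely different (though equivalent and equally standard) path. The paper first rewrites the whole system in terms of $\wn=\ue+\nabla\log\re$, obtaining \eqref{newqns} with the diffusive continuity equation $\partial_t\re+\tfrac1\e\dive(\re\wn)=\tfrac1\e\Delta\re$, and then tests the new momentum equation against $\wn$; the dissipation $\int\re|A\ue|^2$ then drops out immediately from $|\nabla\wn|^2-|D\wn|^2=|A\wn|^2=|A\ue|^2$, since $\nabla^2\log\re$ is symmetric. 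You instead test the \emph{original} momentum equation against $\e\nabla\log\re$ and add $\e$ times the energy identity, so the antisymmetric dissipation has to be extracted from the combination of $\int\re|D\ue|^2$ with the inertial cross terms via the algebraic identity
\begin{equation*}
\int_{\T}\re|D\ue|^2\,dx-\int_{\T}\re(\dive\ue+\ue\cdot\nabla\log\re)^2\,dx-\int_{\T}\re\,(\ue\otimes\ue):\nabla^2\log\re\,dx=\int_{\T}\re|A\ue|^2\,dx,
\end{equation*}
which I checked and which is indeed the classical Bresch--Desjardins computation in its original form. Each of your individual cross-term identities (pressure giving $\tfrac4\gamma\int|\nabla\re^{\gamma/2}|^2$, Poisson giving $\int\re(\re-g_\e)$, friction giving $\frac{d}{dt}\int(\re\log\re-\re+1)$, viscosity giving $2\e\frac{d}{dt}\int|\nabla\rre|^2$) is correct and matches term by term what the paper obtains after expanding $\nabla\re^\gamma\cdot\wn$, $\re\ue\cdot\wn$, $\re\nabla^2\log\re:\nabla\wn$ and $\re\nabla V_\e\cdot\wn$. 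What the paper's route buys is that the sign of the dissipation is manifest without the algebraic identity above; what your route buys is that one never has to derive the reformulated system \eqref{newqns}, only the single scalar cross identity. Both assemble to the same differential identity and integrate in time to \eqref{newbdestimate}.
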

\begin{proof}
Let us start by  recalling the following alternative ways to write the dispersive term:
\begin{equation*}
2\rho \nabla\left( \frac{\Delta \sqrt{ \rho}}{\sqrt{\rho}}\right)= \dive (\rho \nabla^2 \log \rho) = \dive ( 2 \sqrt{\rho} \nabla^2 \sqrt{\rho} - 2 \nabla\sqrt{\rho}\otimes \nabla \sqrt{\rho}). 
\end{equation*} 
Then, for smooth solutions $(\rho_\e,u_\e)$ of $\eqref{qns}$, as usual we multiply the continuity equation $\eqref{qns}_1$ by $\gamma \rho_\e^{\gamma-1} / (\gamma -1)$ and the momentum equation $\eqref{qns}_2$ by $u_\e$ and integrate in space to get:
\begin{equation}\label{es}
\begin{aligned}
&\frac{d}{dt}\int_{\mathbb{T}^3} \left(\frac{1}{2} \rho |u_\e|^2 \right) dx  +  \frac{1}{\epsilon} \int_{\mathbb{T}^3}\rho |Du_\e|^2 dx  + \frac{1}{\epsilon} \int_{\mathbb{T}^3} \nabla \rho^{\gamma} \cdot  u_\e dx + \frac{1}{\epsilon} \int_{\mathbb{T}^3} \rho_\e \nabla^2 \log \rho_\e : \nabla u_\e dx 
\\
&\ +  \frac{1}{\epsilon^2} \int_{\mathbb{T}^3} \rho |u_\e|^2 dx + \frac{1}{\e}\int_{\mathbb{T}^3} \re \nabla V_{\e} \ue dx = 0,
\end{aligned}
\end{equation}
\begin{equation}\label{potential}
\frac{d}{dt} \int_{\T} \frac{1}{2}|\nabla V_{\e}|^2\;dx = -\int_{\T} (\Delta V_{\e})_t V_{\e} \;dx  = \int_{\T} \partial_t \re V_{\e} \;dx  = - \frac{1}{\e} \int_{\T} \dive(\re \ue) V_{\e} \;dx = \frac{1}{\e}\int_{\T} \re \nabla V_{\e} \ue \;dx  ,
\end{equation}
\begin{equation}\label{press}
\frac{d}{dt} \int_{\T} \frac{\rho_\e^\gamma}{\gamma - 1} \; dx - \frac{1}{\epsilon} \int_{\T} \nabla \rho_\e^\gamma u_\e dx = 0.
\end{equation}
Moreover, using the equation satisfied by the so called \textit{effective velocity} $\nabla \log \rho_{\epsilon}$, (see \cite{BVY} for its formal derivation) 
\begin{equation*}
(\rho_\e \nabla \log \rho_\e)_t + \frac{1}{\epsilon}\dive (\rho_\e \nabla \log \rho_\e \otimes u_\e) + \frac{1}{\epsilon} \dive (\rho_\e ^t \nabla u_\e) = 0,
\end{equation*}
after multiplication by $\nabla \log \rho_{\epsilon}$ itself and space integration  we obtain 
\begin{equation}\label{effqns}
\frac{d}{dt} \int_{\T} 2 |\nabla \sqrt{\rho}_\e|^2 \; dx - \frac{1}{\epsilon} \int_{\T} \rho_\e \nabla^2 \log \rho_\e : \nabla u_\e \;dx = 0.
\end{equation}
Hence summing $\eqref{es}, \eqref{press}$ and $\eqref{effqns}$ and integrating time we end up to 
\eqref{fes}.
For the BD Entropy relation, we take once again advantage of  the effective velocity, and in particular introducing the quantity $w_\e = u_\e + \nabla \log \rho_\e$.
Then, using the relations 
\begin{align*}
\frac{1}{\epsilon} \dive (\rho_\e \nabla \log \rho_\e) &= \frac{1}{\epsilon}\Delta \rho_\e,
\\
(\rho_\e \nabla \log \rho_\e)_t &= - \frac{1}{\epsilon} \nabla \dive (\rho_\e u_\e), 
\\ 
\frac{1}{\epsilon} \dive (\rho_\e u_\e \otimes \nabla \log \rho_\e +  \rho_\e \nabla \log \rho_\e \otimes u_\e ) &= \frac{1}{\epsilon} \Delta(\rho_\e u_\e) - \frac{2}{\epsilon} \dive (\rho_\e Du_\e) + \frac{1}{\epsilon} \nabla \dive(\rho_\e u_\e), 
\\
\frac{1}{\epsilon} \dive (\rho_\e \nabla \log \rho_\e \otimes \nabla \log \rho_\e) & = \frac{1}{\epsilon} \Delta(\rho_\e \nabla \log \rho_\e) - \frac{1}{\epsilon} \dive (\rho_\e \nabla^2 \log \rho_\e).
\end{align*}
we obtain the  following alternative version of $\eqref{qns}$:
\begin{equation}\label{newqns}
\begin{aligned}
&\partial_t \rho_\e + \frac{1}{\epsilon} \dive (\rho_\e w_\e) = \frac{1}{\epsilon} \Delta \rho_\e \\
&\partial_t(\rho_\e w_\e) + \frac{1}{\epsilon} \dive (\rho_\e w_\e \otimes w_\e) + \frac{1}{\epsilon} \nabla \rho_\e^{\gamma} + \frac{1}{\epsilon} \dive (\rho_\e Dw_\e) - \frac{1}{\epsilon} \Delta(\rho_\e w_\e) \\
&\ - \frac{1}{\epsilon} \dive (\rho_\e \nabla^2 \log \rho_\e) 
 + \frac{1}{\e} \re \nabla V_{\e}+ \frac{1}{\epsilon^2} \rho_\e u_\e = 0.
\end{aligned}
\end{equation}

Now we pass to compute the energy associated the the system $\eqref{newqns}$. To this end,  as already done for $\eqref{qns}$, we use the continuity equation $\eqref{newqns}_1$ and multiply the equation  $\eqref{newqns}_2$ by $w_\e$ to conclude
\begin{equation*}
\begin{split}
&\partial_t \left(\frac{1}{2} \rho |w_\e|^2 \right) + \frac{1}{\epsilon}\dive \left(\rho_\e w_\e \frac{1}{2}|w_\e|^2\right) + \frac{1}{\epsilon}\left(\frac{1}{2} \Delta \rho_\e |w_\e|^2 - \Delta(\rho_\e w_\e)\cdot w_\e \right) + \\
&\frac{1}{\epsilon} \nabla \rho_\e^{\gamma} \cdot w_\e + \frac{1}{\epsilon} \dive(\rho_\e Dw_\e) \cdot w_\e - \frac{1}{\epsilon} \dive(\rho_\e \nabla^2 \log \rho_\e) \cdot w_\e + \frac{1}{\epsilon^2} \rho_\e u_\e w_\e + \frac{1}{\e}\re \nabla V_\e w_\e= 0.
\end{split}
\end{equation*}
Since
\begin{equation*}
\frac{1}{\epsilon}\left(\frac{1}{2} \Delta \rho_\e |w_\e|^2 - \Delta(\rho_\e w_\e)\cdot w_\e \right) = \frac{1}{\epsilon}  \left( \dive\left ( \nabla \rho_\e \frac{1}{2}|w_\e|^2 \right ) - \dive (\rho_\e \cdot \nabla w_\e) \cdot w_\e \right),
\end{equation*}
 when we integrate the equality above over $\T$ we get:
\begin{align}\label{bd2}
&\frac{d}{dt} \int_{\T} \frac{1}{2} \rho_\e |w_\e|^2 dx  + \frac{1}{\epsilon} \int_{\T} \rho_\e |\nabla w_\e|^2 dx  + \frac{1}{\epsilon} \int_{\T} \nabla \rho_\e^{\gamma} \cdot w_\e  dx + \frac{1}{\e}\int_{\T} \re \nabla V_{\e} w_\e \;dx  \nonumber\\
& - \frac{1}{\epsilon} \int_{\T} \rho_\e |D(w_\e)|^2dx + \frac{1}{\epsilon^2} \int_{\T} \rho_\e u_\e w_\e dx + \frac{1}{\epsilon} \int_{\T} \rho_\e \nabla^2 \log \rho_\e : \nabla w_\e \;dx  = 0.	
\end{align}
We observe that $|\nabla w_\e|^2- |D(w_\e)|^2 = |A(w_\e)|^2=|A(u_\e)|^2$, and, from the definition of $w_\e$, we infer
\begin{align*}
 \nabla \rho_\e^{\gamma} \cdot w_\e 
 &= \nabla \rho_\e^{\gamma} \cdot u_\e  + \frac{4}{\gamma} |\nabla \rho_\e^{\gamma/2}|^2 ;
 \\
 \rho_\e u_\e w_\e &= \rho_\e u_\e^2 + \rho_\e u_\e \nabla \log \rho_\e ;
 \\
\rho_\e \nabla^2\log \rho_\e: \nabla w_\e &= \rho_\e \nabla^2\log \rho_\e : \nabla u_\e + \rho_\e |\nabla^2\log \rho_\e|^2\\
\re \nabla V_{\e} w_\e &= \re \nabla V_{\e} \ue + \re \nabla V_{\e} \nabla \log \re
\end{align*}
Moreover, we recall \eqref{press}, \eqref{effqns} and \eqref{potential} to compute:
\begin{equation*}
    \frac{d}{dt} \int_{\mathbb{T}^3} \big (\rho_\e(\log \rho_\e - 1) + 1 \big )dx = \frac{1}{\epsilon} \int_{\T} \rho_\e u_\e \nabla \log \rho_\e \;dx ,
\end{equation*}
\begin{equation*}
\frac{1}{\e} \int_{\T} \re \nabla V_{\e} w_\e \;dx = \frac{d}{dt} \int_{\T} \frac{1}{2} |\nabla V_{\e}|^2 \;dx  + \frac{1}{\e}\int_{\T} \re \nabla V_{\e} \nabla \log \re \;dx ,
\end{equation*}
the last term can be rewrite:
\begin{equation*}
\frac{1}{\e}\int_{\T} \re \nabla V_{\e} \nabla \log \re \;dx = \frac{1}{\e} \int_{\T} \nabla V_{\e} \nabla \re\;dx  = -\frac{1}{\e}\int_{\T} \Delta V_{\e} \re \;dx = \frac{1}{\e} \int_{\T} \re(\re -g_\e) \;dx   
\end{equation*}
Therefore we multiply $\eqref{bd2}$ by $\epsilon$ and, using the previous identities, we obtain the final estimate
\begin{align*}
&\frac{d}{dt} \int_{\mathbb{T}^3} \left [\epsilon \left(\frac{1}{2} \rho |w_\e|^2 + \frac{\rho_\e^{\gamma}}{\gamma -1} + 2|\nabla \sqrt{\rho_\e}|^2 + \frac{1}{2}|\nabla V_{\e}|^2 \right) + \rho_\e( \log \rho_\e - 1) +1 \right ]dx \\
& +\int_{\mathbb{T}^3} \rho |A(u_\e)|^2 \;dx + \frac{4}{\gamma} \int_{\mathbb{T}^3} |\nabla \rho_\e^{\frac{\gamma}{2}}|^2 \;dx  + \int_{\mathbb{T}^3} \rho_\e |\nabla^2 \log \rho_\e|^2 \;dx  + \frac{1}{\epsilon} \int_{\mathbb{T}^3} \rho_\e u_\e^2 \;dx + \int_{\T} \re(\re - g_\e) \;dx = 0
\end{align*}
which gives $\eqref{newbdestimate}$ upon time integration. 
\end{proof}

\subsection{Weak solution of the Quantum-Drift-Diffusion equation}
Next, we consider the Quantum Drift--Diffusion--Poisson equation in $(0,T)\times\T$ with $g:\T \rightarrow \R$
\begin{equation}\label{eq:qdd}
\begin{aligned}
    &\d_t\rho + \dive \left(2\rho \nabla\left( \frac{\Delta \sqrt{ \rho}}{\sqrt{\rho}}\right) - \nabla \rho^{\gamma} - \rho \nabla V \right) = 0 \\
    & - \Delta V = \rho - g,
\end{aligned}
\end{equation}
with initial datum
\begin{equation}\label{eq:qddid}
\rho|_{t=0}=\rho^0\geq 0,
\end{equation}
and $V$ such that
\begin{equation}\label{meanVQDD}
    \fint_{\T} V dx = 0.
\end{equation}
As before, to solve    \eqref{eq:qdd}$_2$, \eqref{meanVQDD} we need the following assumption  for   $g$:  defining $M:=\fint_{\T} \rho^0$, we assume that $\fint_{\T} g = M$.

The following definition  specifies the framework of weak solutions to  \eqref{eq:qdd}  we shall obtain at the limit.
\begin{definition}\label{WSGF}
Given $g \in L^2(\T)$, we say that $(\rho, V)$	with $\rho \geq 0$ is a finite energy weak solution of \eqref{eq:qdd} if
	\begin{itemize}
	    \item[(1)] Integrability condition: \\
	    \begin{equation*}
	    \begin{aligned}
	        &\rrho\in L^{\infty}(0,T;H^{1}(\T))\cap L^{2}(0,T;H^2(\T));\quad 
	        &\rho\in L^{\infty}(0,T;L^{\gamma}(\T))\\
	        & V \in C((0,T);H^2(\T))	        
	    \end{aligned}
	\end{equation*}
	\item[(2)] Continuity equation: 
	\begin{equation*}
	\int_{\T}\rho^0 \phi(0) \;dx + \iint \big (\rho \phi_t-\rho^{\gamma} \dive \nabla \phi-2\nabla \sqrt{\rho}\otimes\nabla\sqrt{\rho}:\nabla^2 \phi+2\sqrt{\rho} \nabla^2 \sqrt{\rho}:\nabla^2 \phi + \rho \nabla V \cdot \phi \big) \;ds dx = 0,
	\end{equation*} 
	for any  $\phi \in C_c^{\infty} ([0,T);C^{\infty} (\mathbb{T}^3))$,
	\item[(3)] Poisson equation:\\
		For a.e. $(t,x)\in (0,T)\times\T$ it holds that 
		\begin{equation}\label{eq:poissonQDD}
		-\Delta V=\rho-g
		\end{equation}
	\item [(4)] Entropy inequalities:\\
	 there exist $\Lambda,\,\mathcal{S} \in L^{2}((0,T)\times\T)$ such that 
\begin{align}
&\rrho\Lambda=\dive (2\rrho\nabla^2\rrho-2\nabla\rrho\otimes\nabla\rrho-\rho^{\gamma}\mathbb{I})- \rho \nabla V \textrm{ in }\mathcal{D}'((0,T)\times\T)\label{eq:idenlambda}\\
&\rrho\mathcal{S}=2\rrho\nabla^2\rrho-2\nabla\rrho\otimes\nabla\rrho\,\mbox{ a.e. in }(0,T)\times\T\label{eq:idens}
\end{align} 
and constant $C>0$ such that for a.e. $t\in(0,T)$ 
\begin{equation}\label{eq:form11}
\int_{\T}\left(|\nabla\rrho|^2+\frac{\rho^{\gamma}}{\gamma-1} + \frac{1}{2}|\nabla V|^2 \right)(t) \;dx + \int_0^t \int_{\T}|\Lambda|^2 \; ds dx  \leq C,
\end{equation} 
\begin{equation}\label{eq:form22}
\begin{aligned}
&\int_{\T}(\rho(\log\rho-1)+1)(t) \;dx +\int_0^t\int_{\T}|\mathcal{S}|^2 \;ds dx +\frac{4}{\gamma}\int_0^t\int_{\T}|\nabla\rho^{\frac{\gamma}{2}}|^2\;ds dx + \int_0^t\int_{\T} \rho(\rho - g)\;ds dx   \\
& \leq \int_{\T} (\rho^0 (\log\rho^0 -1)+1) \;dx .
\end{aligned}
 \end{equation}
 \end{itemize}
In addition,  $(\rho, V)$ is called \emph{energy dissipating weak solution} if in particular it holds that for a.e. $t\in (0,T)$: 

\begin{equation}\label{eq:form23}
\begin{aligned}
\int_{\T}\left(|\nabla\rrho|^2+\frac{\rho^{\gamma}}{\gamma-1}+ \frac{1}{2}|\nabla V|^2 \right)(t)\,dx+\int_0^t\int_{\T}|\Lambda|^2\,dsdx &\leq \int_{\T}\,|\nabla\sqrt{\rho^0}|^2\,dx+\int_{\T}\frac{{\rho}^{0 \gamma}}{\gamma-1}\,dx\\
&+\int_{\T}\frac{1}{2} |\nabla V(0,x)|^2.
\end{aligned}
\end{equation} 
\end{definition}
\begin{remark}
The terminology finite energy and energy dissipating weak solutions is motivated by the fact that both \eqref{eq:form11} and \eqref{eq:form23} arises as a weak limit of the energy inequality \eqref{fes-1}. As in Proposition \ref{prop:energybd} we derive the formal estimates in the framework of smooth solutions.
\end{remark}

\begin{proposition}\label{enebdqddreg}
Let    $(\rho,V)$ be a smooth solution of \eqref{eq:qdd} with data satisfying \eqref{eq:qddid},\eqref{meanVQDD}. Then, for any $t\in[0,T)$ the pair $(\rho, V)$ satisfies the following estimates:  
\begin{align}
	&\frac{d}{dt}\int_{\T} \left (2|\nabla\rrho|^2+\frac{\rho^\gamma}{\gamma-1} + \frac{1}{2}|\nabla V|^2 \right )dx +\int_{\T}\rho\left|2\nabla\left(\frac{\Delta\rrho}{\rrho}\right)-\gamma\rho^{\gamma-2}\nabla\rho- \nabla V \right|^2 \;dx  =0,\label{eq:form1}\\
	&\frac{d}{dt}\int_{\T} \rho(\log\rho-1)+1+ \;dx \int_{\T} \rho|\nabla^2\log\rho|^2\;dx +\frac{4}{\gamma}\int_{\T}|\nabla\rho^{\frac{\gamma}{2}}|^2\;dx  + \int_{\T} \rho(\rho -g)\;dx =0.\label{eq:form2}
	\end{align}
\end{proposition}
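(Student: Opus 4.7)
The plan is to rewrite \eqref{eq:qdd} in the continuity form
\[
\partial_t\rho + \dive(\rho\Phi) = 0, \qquad \Phi := 2\nabla\!\left(\frac{\Delta\rrho}{\rrho}\right) - \gamma\rho^{\gamma-2}\nabla\rho - \nabla V,
\]
so that the Fisher-type dissipation in \eqref{eq:form1} is exactly $\int_\T \rho|\Phi|^2\,dx$. Structurally this is the Wasserstein gradient flow driven by the energy $E(\rho) := \int_\T(2|\nabla\rrho|^2 + \rho^\gamma/(\gamma-1) + |\nabla V|^2/2)\,dx$, and both identities will be obtained by testing the continuity equation against the appropriate ``chemical potential''.

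For \eqref{eq:form1} I will compute $dE/dt$ term by term. For the Bohm-type contribution, rewriting $2|\nabla\rrho|^2 = |\nabla\rho|^2/(2\rho)$ followed by one integration by parts yields $\delta/\delta\rho\int 2|\nabla\rrho|^2\,dx = -2\Delta\rrho/\rrho$. The pressure term is immediate, and for the Poisson contribution, differentiating $\int|\nabla V|^2/2\,dx$ in time and using \eqref{eq:poissonQDD} together with the time independence of $g$ gives $\int V\,\partial_t\rho\,dx$. Summing the three pieces yields
\[
\frac{d}{dt}E(\rho) = \int_\T \mu\,\partial_t\rho\,dx, \qquad \mu := -2\frac{\Delta\rrho}{\rrho} + \frac{\gamma\rho^{\gamma-1}}{\gamma-1} + V,
\]
and since by construction $\nabla\mu = -\Phi$, a single integration by parts against $\partial_t\rho = -\dive(\rho\Phi)$ delivers $dE/dt = -\int_\T\rho|\Phi|^2\,dx$, which is precisely \eqref{eq:form1}.

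For the free-energy identity \eqref{eq:form2}, the natural test function is $\log\rho$, which immediately gives
\[
\frac{d}{dt}\int_\T\bigl(\rho(\log\rho-1)+1\bigr)\,dx = \int_\T \log\rho\,\partial_t\rho\,dx = \int_\T \nabla\rho\cdot\Phi\,dx,
\]
after one integration by parts. The three contributions of $\Phi$ are then handled separately: the pressure term produces $-(4/\gamma)\int_\T|\nabla\rho^{\gamma/2}|^2\,dx$ via $\gamma\rho^{\gamma-2}|\nabla\rho|^2 = (4/\gamma)|\nabla\rho^{\gamma/2}|^2$; the Poisson term, after one integration by parts and \eqref{eq:poissonQDD}, yields $-\int_\T\rho(\rho-g)\,dx$. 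The only non-routine step is the quantum contribution, for which I will invoke the identity from Remark \ref{rem:3orderlimit}, namely $2\rho\nabla(\Delta\rrho/\rrho) = \dive(\rho\nabla^2\log\rho)$; rewriting $\nabla\rho = \rho\nabla\log\rho$ and integrating by parts twice then gives
\[
\int_\T \nabla\rho\cdot 2\nabla\!\left(\frac{\Delta\rrho}{\rrho}\right)dx = \int_\T \nabla\log\rho\cdot\dive(\rho\nabla^2\log\rho)\,dx = -\int_\T \rho|\nabla^2\log\rho|^2\,dx.
\]
Collecting the three pieces produces \eqref{eq:form2}.

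Since the computation is carried out for smooth solutions, every integration by parts is fully justified, and the only mildly delicate ingredients are the Madelung-type variational derivative of $\int 2|\nabla\rrho|^2\,dx$ and the identity \eqref{eq:dispide} used to absorb the third-order quantum term into $\rho|\nabla^2\log\rho|^2$. No substantial obstacle is anticipated.
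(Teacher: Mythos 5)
Your proposal is correct and follows essentially the same route as the paper: identity \eqref{eq:form2} is obtained by testing with $\log\rho$, and \eqref{eq:form1} by testing with $-2\Delta\rrho/\rrho+\gamma\rho^{\gamma-1}/(\gamma-1)$ together with $V$, which is exactly your chemical potential $\mu$. The only difference is presentational: your observation that $\nabla\mu=-\Phi$ makes the dissipation term $\int_\T\rho|\Phi|^2\,dx$ appear in one step, whereas the paper expands the square term by term; the content is identical.
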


\begin{proof}
The energy \eqref{eq:form2} is achieved by multiplying \eqref{eq:qdd} by $\log\rho$ and integrating  by parts:
	\begin{align*}
	& \int_{\T}\rho_t \log \rho \;dx  = \frac{d}{dt} \int_{\T}  \big ((\rho (\log \rho - 1)) +1\big ) \;dx, \\
	& \int_{\T} \dive \left( 2\rho \nabla\left( \frac{\Delta \sqrt{ \rho}}{\sqrt{\rho}}\right) - \nabla \rho^{\gamma} \right) \log \rho \;dx  = \int_{\T}\dive \left( \dive\left(\rho \nabla^2 \log \rho\right) - \nabla \rho^{\gamma} \right)\log \rho \;dx \\
	& = \int_{\T} \rho|\nabla^2\log\rho|^2 \;dx +\frac{4}{\gamma}\int_{\T}|\nabla\rho^{\frac{\gamma}{2}}|^2 \;dx , \\
	&- \int_{\T}  \dive(\rho \nabla V) \log \rho \;dx = \int_{\T}\rho \nabla V \nabla \log \rho \;dx = - \int_{\T} \Delta V \rho\;dx = \int_{\T} \rho(\rho - g)\;dx .
	\end{align*}
	
	Moreover, if we multiply \eqref{eq:qdd}  by $- 2\Delta\rrho/\rrho+\gamma\rho^{\gamma-1}/(\gamma-1)$, after integrating by parts we get
	\begin{align*}
	& \int_{\T} \rho_t \left( \frac{-2\Delta \sqrt{\rho}}{\sqrt{\rho}} \right) \;dx  = -\int_{\T} \dive \left(  2\frac{\nabla \sqrt{\rho}}{\sqrt{\rho}} \rho_t \right) \;dx  + \int_{\T} \nabla \sqrt{\rho} \nabla \left(2 \frac{\rho_t}{\sqrt{\rho}} \right)\;dx  =  \frac{d}{dt}\int_{\T} 2 |\nabla \sqrt{\rho}|^2 \;dx ,
	\\
	&\int_{\T} \rho_t \left( \frac{\gamma}{\gamma -1} \rho^{\gamma -1} \right) \;dx  = \frac{d}{dt}\int_{\T} \frac{\rho^{\gamma}}{\gamma -1 } \;dx ,
	\\
	& \int_{\T} \dive \left(2\rho \nabla \left( \frac{\Delta \sqrt{\rho}}{\sqrt{\rho}} \right)\right) \left(- 2 \frac{\Delta \sqrt{\rho}}{\sqrt{\rho}} + \frac{\gamma}{\gamma -1} \rho^{\gamma -1} \right)\;dx  =\\
	& = \int_{\T}\left ( 4\rho \left| \nabla \left(  \frac{\Delta \sqrt{\rho}}{\sqrt{\rho}} \right) \right|^2 - 2\gamma \rho \nabla \left(  \frac{\Delta \sqrt{\rho}}{\sqrt{\rho}} \right)\rho^{\gamma-2} \nabla \rho  \right ) \;dx ,
	\\
	&\int_{\T} \dive(-\nabla \rho^{\gamma}) \left(- 2 \frac{\Delta \sqrt{\rho}}{\sqrt{\rho}} + \frac{\gamma}{\gamma -1} \rho^{\gamma -1} \right) \;dx =  \int_{\T} -\nabla \rho^{\gamma} \nabla \left( 2\frac{\Delta \sqrt{\rho}}{\sqrt{\rho}} \right) + \rho|\gamma \rho^{\gamma -2}\nabla \rho|^2 \;dx \\
	& = \int_{\T} -2\gamma \rho \rho^{\gamma -2}\nabla \rho \nabla \left(  \frac{\Delta \sqrt{\rho}}{\sqrt{\rho}} \right) + \rho|\gamma \rho^{\gamma -2}\nabla \rho|^2 \;dx ,\\
	& \int_{\T} - \dive(\rho \nabla V) \frac{\gamma}{\gamma -1}\rho^{\gamma -1}\;dx  = \int_{\T}\rho \nabla V \gamma \rho^{\gamma-2} \nabla \rho \;dx  = \int_{\T} \nabla V \nabla \rho^{\gamma}\;dx  = -\int_{\T} \Delta V \rho^{\gamma} \;dx  \\
	& \int_{\T} \dive(\rho \nabla V) 2\frac{\Delta \sqrt{\rho}}{\sqrt{\rho}}\; dx  = - \int_{\T} \rho \nabla V 2\nabla \left(\frac{\Delta \sqrt{\rho}}{\sqrt{\rho}}\right) \;dx = - \int_{\T} \nabla V \dive(\rho \nabla^2 \log \rho) \; dx 
	\end{align*}
	Finally we multiply \eqref{eq:qdd} by $V$, integrating by parts we get:
	\begin{equation*}
	    \int_{\T} \rho_t V\;dx  = - \int_{\T} \Delta V_t\;V \;dx  = \int_{\T} \nabla V_t \nabla V \;dx  = \frac{1}{2}\frac{d}{dt} \int_{\T} |\nabla V|^2 \;dx ,
	\end{equation*}
\begin{equation*}
    \int_{\T} \dive \left(2 \rho \nabla \left(\frac{\Delta \sqrt{\rho}}{\sqrt{\rho}}\right) \right) V \;dx = - \int_{\T}2 \rho \nabla \left(\frac{\Delta \sqrt{\rho}}{\sqrt{\rho}}\right) \nabla V\;dx  =  - \int_{\T} \nabla V \dive(\rho \nabla^2 \log \rho)\;dx ,
\end{equation*}
\begin{equation*}
    \int_{\T} -\dive(\nabla \rho^{\gamma}) V =  \int_{\T}\nabla \rho^{\gamma} \nabla V = -  \int_{\T} \rho^{\gamma} \Delta V 
\end{equation*}
and \eqref{eq:form1} follows by summing up all terms.
\end{proof}

\begin{remark}
	It is worth to observe that if we perform the Hilbert expansion of $\eqref{qns}$ the  limit solution $(\bar \rho, \bar u)$ formally satisfies at the first non trivial order $O(1/\e)$ (the order $O(1/\e^2)$ tells us the momentum expansion starts form  the power one in $\e$)  the following identities:
	\begin{align*}
	\bar \rho \bar u = \e \bar \rho \left( 2\nabla \left( \frac{\Delta \sqrt{\bar\rho}}{\sqrt{\bar\rho}}\right) - \gamma \bar\rho^{\gamma-2} \nabla \bar\rho - \nabla \bar V \right),
\\
	\partial_t \bar \rho + \dive \left( 2 \bar\rho\nabla \left( \frac{\Delta \sqrt{\bar\rho}}{\sqrt{\bar\rho}}\right) - \nabla \bar\rho^{\gamma} - \bar \rho \nabla \bar V \right)= 0,
	\end{align*}
	which is exactly the Quantum Drift--Diffusion equation $\eqref{eq:qdd}$. We underline also that, with the above definition  for $\bar u$, namely 
	\begin{equation*}
	    	(\bar \rho, \bar u)= \left(\bar \rho, \e \left( 2\nabla \left( \frac{\Delta \sqrt{\bar\rho}}{\sqrt{\bar\rho}}\right) - \gamma \bar\rho^{\gamma-2} \nabla \bar\rho - \nabla \bar V \right) \right),
	    		\end{equation*}
in the formulation for 
	the mechanical energy $\eqref{fes}$ for $\eqref{qns}$, at the limit the latter  reduces to  $\eqref{eq:form1}$ after a time integration, that is the corresponding  conservation of the mechanical energy for the Quantum Drift--Diffusion equation. The same happens for the BD entropy, namely $\eqref{newbdestimate1}$ for $\e\to 0$ reduces to (the  time integrated version of) \eqref{eq:form2}. This fact is coherent with our analysis and it will be validated by Theorem $\ref{teo:weakweak}$ below.
\end{remark}
\begin{remark}
We remark that if $\rho$ is a weak solution in the sense of the previous definition the initial datum $\rho^0$ is attained for example in the strong topology of $L^{2}(\T)$. 
\end{remark}

\section{Main result}\label{sec:3}
In this section we prove Theorem \ref{teo:mainintro}, which we rewrite for reader's convenience. 
\begin{theorem}\label{teo:weakweak}
Let $(\re,\ue, V_\e)$ be a weak solution of $\eqref{qns}$ in the sense Definition \ref{WSQNS} with data \eqref{eq:id} satisfying 
\begin{equation}\label{eq:idn}
\begin{aligned}
&\{\rho_\epsilon^0\}_{\e}\mbox{ is bounded in }L^1 \cap L^{\gamma} (\T)\ \hbox{such that}\ \rho_\epsilon^0 \to \rho^0\ \hbox{in}\ L^q(\T),\, q<3 \\ 
&\{\nabla \sqrt{\rho_\epsilon^0}\}_{\e}\mbox{ is bounded in }L^2(\T),\\
&\{\sqrt{\rho_\epsilon^0 }u_\epsilon^0\}_{\e}\mbox{ is bounded in }L^2(\T),\\
&\{ g_\e \}_{\e} \in L^2(\T) \mbox{ and  } g_\e \weakto g \mbox{ weakly in }  L^2(\T).
\end{aligned}
\end{equation}
Then, up to subsequences, there exist $\rho\geq 0$ and $V$ such that 
\begin{align*}
& \rre\rightarrow \rrho\textrm{ strongly in }L^{2}((0,T);H^{1}(\T)) \\
& \nabla V_\e \rightarrow \nabla V \text{ strongly in } C([0,T);L^{2}(\T))
\end{align*}
and $(\rho, V)$ is a finite energy weak solution of $\eqref{eq:qdd}-\eqref{eq:qddid}$ in the sense of  Definition \ref{WSGF}.
If in addition to \eqref{eq:idn}, it also hold that 
\begin{equation}\label{eq:convid}
\begin{aligned}
&\sqrt{\rho_\e^0}\ue^0\to 0\mbox{ strongly in }L^{2}(\T)\\
&\nabla \sqrt{\rho_\e^0} \to \nabla\sqrt{\rho^0}\mbox{ strongly in }L^{2}(\T)\\
&\rho_\e^0 \to\rho^0\mbox{ strongly in } L^{\gamma}(\T)\\
\end{aligned}
\end{equation}
then $(\rho,V)$ is an energy dissipating   weak solution.
\end{theorem}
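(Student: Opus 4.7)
The plan is to derive uniform-in-$\e$ bounds from the energy inequality \eqref{fes-1} and the BD entropy \eqref{newbdestimate1}, extract strong and weak limits by compactness, and then pass to the limit in the weak formulations to produce the claimed QDD weak solution together with its entropy bounds.

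First I would collect the a priori estimates. The energy inequality \eqref{fes-1} yields $\{\rre\}$ bounded in $L^\infty(0,T;H^1(\T))$, $\{\re^\gamma\}$ in $L^\infty(0,T;L^1(\T))$, $\{\nabla V_\e\}$ in $L^\infty(0,T;L^2(\T))$, and $\|\MT^s_\e\|_{L^2((0,T)\times\T)}=O(\sqrt{\e})$. The crucial observation is that the $\e^{-2}$ factor in front of the damping term forces the quantity $\Lambda_\e:=\rre\ue/\e$ to be uniformly bounded in $L^2((0,T)\times\T)$; this is exactly the object which in the limit plays the role of $\Lambda$. The BD entropy \eqref{newbdestimate1} together with \eqref{eq:jung} further provides $\{\mathcal{S}_\e\}$, $\{\nabla\re^{\gamma/2}\}$, $\{\nabla^2\rre\}$ uniformly bounded in $L^2((0,T)\times\T)$ and $\{\re(\log\re-1)+1\}$ in $L^\infty(0,T;L^1(\T))$. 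The $\e$-weighted right-hand side of \eqref{newbdestimate1} stays bounded thanks to the estimate $\|\sqrt{\rho_\e^0}\,\wn^0\|_{L^2(\T)}\le\|\sqrt{\rho_\e^0}\,\ue^0\|_{L^2(\T)}+2\|\nabla\sqrt{\rho_\e^0}\|_{L^2(\T)}$ combined with the initial hypotheses.

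Next I would extract the convergences. Writing the continuity equation as $\partial_t\re=-\dive(\rre\Lambda_\e)$ shows $\partial_t\re$ is uniformly bounded in $L^2(0,T;W^{-1,p}(\T))$ for a suitable $p>1$ via Sobolev embedding on $\rre$. Together with $\rre\in L^\infty(0,T;H^1(\T))\cap L^2(0,T;H^2(\T))$ uniformly, an Aubin-Lions argument adapted to the possible presence of vacuum (as in \cite{LV,AS1}) yields $\rre\to\rrho$ strongly in $L^2(0,T;H^1(\T))$, whence $\re\to\rho$ strongly in $C([0,T];L^q(\T))$ for $q<3$. The Poisson equation then promotes this to $\nabla V_\e\to\nabla V$ strongly in $C([0,T];L^2(\T))$. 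Up to subsequences, $\Lambda_\e\weakto\Lambda$, $\mathcal{S}_\e\weakto\mathcal{S}$ weakly in $L^2((0,T)\times\T)$, and $\nabla^2\rre\weakto\nabla^2\rrho$.

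The identification of the limit PDE proceeds by multiplying the weak momentum equation \eqref{eq:wfme} by $\e$ and passing to the limit. The time-derivative and initial-datum terms are $O(\e)$ since $\re\ue=\e\,\rre\Lambda_\e$; the convective term is $O(\e^2)$ because $\|\rre\ue\|_{L^2}=O(\e)$; the viscous term is $O(\sqrt{\e})$ from the bound on $\MT^s_\e$. The damping term $\e^{-1}\iint\re\ue\,\psi=\iint\rre\Lambda_\e\,\psi$ converges to $\iint\rrho\Lambda\,\psi$ via the strong convergence of $\rre$; the pressure, potential and dispersive terms pass to the limit by strong $L^2(0,T;H^1(\T))$ convergence of $\rre$ paired with weak $L^2((0,T)\times\T)$ convergence of $\nabla^2\rre$ and strong convergence of $\nabla V_\e$. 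This delivers the distributional identity \eqref{eq:Lambda}. Substituting $\e^{-1}\re\ue=\rre\Lambda_\e\to\rrho\Lambda$ into \eqref{eq:wfce} and using \eqref{eq:Lambda} produces the weak QDD equation. The identity \eqref{eq:S} is obtained by passing to the limit in \eqref{eq:dispdiss} through the strong-weak product of $\rre$ against $\nabla^2\rre$ and the strong convergence of $\nabla\rre\otimes\nabla\rre$ in $L^1((0,T)\times\T)$.

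Finally, the Fisher bound \eqref{eq:fisher} follows by weak lower semicontinuity applied to \eqref{fes-1}, noting the pointwise identity $\re|\ue|^2/\e^2=|\Lambda_\e|^2$, and the free-energy bound \eqref{eq:free} follows analogously from \eqref{newbdestimate1}, with the $\e$-weighted right-hand side terms vanishing in the limit. The initial datum $\rho(0)=\rho^0$ is recovered from strong convergence of $\rho_\e^0$ and time continuity of $\re$. For the energy-dissipating case, the strengthened initial-data convergences \eqref{eq:convid} ensure the full right-hand side of \eqref{fes-1} converges to the Fisher initial energy of $\rho^0$; combining this with weak lower semicontinuity on the left-hand side produces \eqref{eq:form23}. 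The hard part will be securing the strong $L^2(0,T;H^1(\T))$ compactness of $\rre$: the $\e^{-1}$ scaling in the continuity equation and the possible presence of vacuum must be managed simultaneously, and this is precisely where the interplay between the energy and BD entropy estimates becomes essential.
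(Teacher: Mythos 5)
Your proposal follows essentially the same route as the paper: scaled energy and BD entropy bounds giving $\sqrt{\rho_\e}\,u_\e/\e$ bounded in $L^2$, Aubin--Lions compactness through the continuity equation, passage to the limit in the $\e$-multiplied momentum equation to identify $\Lambda$, and lower semicontinuity for the entropy inequalities. The only places where the paper does more work than you indicate are the truncation arguments establishing the strong $L^2$ convergences of $\sqrt{\rho_\e}$, $\rho_\e\log\rho_\e$, $\rho_\e^\gamma$ and of the initial entropy $\rho_\e^0\log\rho_\e^0$, and the observation that the term $\int_0^t\int\rho_\e(\rho_\e-g_\e)$ must be passed to the limit by strong--weak pairing rather than by lower semicontinuity; these are details, not gaps in the strategy.
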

The proof of Theorem \ref{teo:weakweak} requires several preliminaries, collected in the following section.
\subsection{Preliminary results} We start by proving the uniform bounds obtained by the Energy estimate and BD Entropy. 
\begin{lemma}\label{lem:unifaprioriinitial}
There exists a constant $C>0$ such that the following uniform bounds on the data 
\begin{align}\label{uniformbounds}
&\sup_t \int_{\T} \rho_{\epsilon} \;dx \leq C;\quad\sup_t \int_{\T} \rho_{\epsilon} u_\epsilon^2 \;dx  \leq C;\quad \int_0^T \int_{\T}|\nabla\re^{\frac{\gamma}{2}}|^2 \; dsdx \leq C;\quad\sup_t \int|\nabla\sqrt{\rho_{\epsilon}}|^2\;dsdx  \leq C;\nonumber\\
&\frac{1}{\epsilon} \int_0^T \int_{\T} |\MT_{\e}^{s}|^2 \;dsdx  \leq C;\quad\sup_t \int \rho_{\epsilon}^{\gamma}\;dx  \leq C;\quad \frac{1}{\e^2}\int_0^T\int\rho_{\epsilon} u_\epsilon^2 \;dsdx  \leq C; \nonumber\\
& \int_0^T \int_{\T} |\nabla^2 \sqrt{\rho_{\epsilon}}|^2 \;dsdx \leq C; \quad  \int_0^T \int_{\T} |\mathcal{S}_{\epsilon}|^2 \;dsdx  \leq C; \quad \sup_t \int_{\T} |\nabla^2 V_{\e}|^2 \;dx  \leq C
\end{align}
\end{lemma}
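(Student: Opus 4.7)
The plan is to extract the listed bounds directly from the three structural estimates available for weak solutions of \eqref{qns}: the energy inequality \eqref{fes-1}, the BD entropy \eqref{newbdestimate1}, and the auxiliary inequality \eqref{eq:jung}. The $L^1$ bound $\sup_t\int\re \leq C$ is immediate from mass conservation, obtained by testing \eqref{eq:wfce} against $\phi\equiv 1$ together with the $L^1$ bound in \eqref{eq:idn}. All the other bounds will follow once we show that the right-hand sides of \eqref{fes-1}, \eqref{newbdestimate1}, and \eqref{eq:jung} are uniformly bounded in $\e$ under \eqref{eq:idn}.

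For the energy inequality, the terms $\int\re^0|\ue^0|^2$, $\int(\re^0)^\gamma$, and $\int|\nabla\rre^0|^2$ are uniformly bounded directly by \eqref{eq:idn}. The electrostatic contribution $\int|\nabla V_\e(0)|^2$ is handled via Remark \ref{rem:4}: from $-\Delta V_\e(0)=\re^0-g_\e$ and standard elliptic theory, $\|V_\e(0)\|_{H^2}\leq C(\|\re^0\|_{L^2}+\|g_\e\|_{L^2})$. The uniform bound on $\|\re^0\|_{L^2}$ follows by combining the $L^1$ bound and the $\dot H^1$ bound on $\rre^0$ with the Sobolev embedding $H^1(\T)\hookrightarrow L^6(\T)$, which yields $\re^0\in L^3$ uniformly. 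Reading off the left-hand side of \eqref{fes-1} then delivers the bounds on $\sup_t\int\re|\ue|^2$, $\sup_t\int\re^\gamma$, $\sup_t\int|\nabla\rre|^2$, the dissipation $\frac{1}{\e}\int_0^T\!\int|\MT_\e^s|^2$, and the damping term $\frac{1}{\e^2}\int_0^T\!\int\re|\ue|^2$.

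Turning to the BD entropy, the crucial observation is the factor $\e$ in front of the term $\int\re^0|\wn^0|^2$: expanding $\wn^0=\ue^0+\nabla\log\re^0$ and using $\re^0|\nabla\log\re^0|^2=4|\nabla\rre^0|^2$ yields $\e\int\re^0|\wn^0|^2\leq 2\e\|\rre^0\ue^0\|_{L^2}^2+8\e\|\nabla\rre^0\|_{L^2}^2$, which is uniformly bounded. The initial entropy $\int(\re^0(\log\re^0-1)+1)\,dx$ is controlled by $\|\re^0\|_{L^1}+\|\re^0\|_{L^\gamma}^\gamma$ via the elementary inequality $|x(\log x-1)+1|\leq C(x+x^\gamma)$ valid for any $\gamma>1$. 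The sign-indefinite coupling $\int_0^t\!\int\re(\re-g_\e)\,ds\,dx$ is handled with Young's inequality: $\re(\re-g_\e)\geq \tfrac{1}{2}\re^2-\tfrac{1}{2}g_\e^2$, so the integral is bounded from below by $-\tfrac{T}{2}\sup_\e\|g_\e\|_{L^2}^2$ and can be absorbed harmlessly on the right-hand side. The inequality \eqref{newbdestimate1} then yields the $L^2_{t,x}$ bounds on $\mathcal{S}_\e$ and on $\nabla\re^{\gamma/2}$.

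It remains to bound $\int_0^T\!\int|\nabla^2\rre|^2$ and $\sup_t\int|\nabla^2 V_\e|^2$. The first follows directly from \eqref{eq:jung}, whose right-hand side has the same structure as the BD entropy data and is therefore uniformly bounded by the previous step. For the second, elliptic regularity applied to $-\Delta V_\e=\re-g_\e$ gives $\|V_\e\|_{H^2}\leq C(\|\re\|_{L^2}+\|g_\e\|_{L^2})$; Sobolev embedding $H^1(\T)\hookrightarrow L^6(\T)$ applied to $\rre$, combined with the already established bounds on $\sup_t\|\rre\|_{L^2}$ (mass) and $\sup_t\|\nabla\rre\|_{L^2}$ (energy), gives $\rre\in L^\infty_t L^6_x$ and hence $\re\in L^\infty_t L^3_x\hookrightarrow L^\infty_t L^2_x$ uniformly. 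The only genuinely delicate point is the sign analysis of the quadratic coupling $\re(\re-g_\e)$ in the BD entropy; everything else is linear bookkeeping starting from the structural inequalities.
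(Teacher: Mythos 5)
Your proof follows essentially the same route as the paper: establish mass conservation, bound the right-hand sides of the energy inequality \eqref{fes-1}, the BD entropy \eqref{newbdestimate1} and the inequality \eqref{eq:jung} uniformly in $\e$ (using the $\e$-weights on the initial kinetic and BD terms and the $L^2$ control of $\rho^0_\e$ via $H^1\hookrightarrow L^6$), absorb the sign-indefinite coupling $\int\re(\re-g_\e)$ by Young's inequality exactly as the paper does, and obtain the potential bound from elliptic regularity for the Poisson equation. The only slip is the elementary inequality $|x(\log x-1)+1|\le C(x+x^{\gamma})$, which fails as $x\to 0^{+}$ (the left-hand side tends to $1$ while the right-hand side vanishes); it should read $|x(\log x-1)+1|\le C(1+x+x^{\gamma})$, and since $|\T|=1$ the additional constant integrates to a finite quantity, so the conclusion is unaffected.
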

\begin{proof}
We first notice that under the hypothesis \eqref{eq:idn} we have that 
\begin{equation}\label{eq:log1}
\{\re^0\log\re^0\}_{\e}\text{ is bounded in }L^{1}(\T). 
\end{equation}
This is obtained very easily under the hypothesis \eqref{eq:idn}. Indeed, one can easily show that for $q>p\geq1$ there exists $C>0$ such that for $s\geq 0$:
\begin{equation*}
|s\log s|^q\leq C(1+s^p)
\end{equation*}
and therefore, by taking $q=\gamma$ and $1\leq p<\gamma$ we have that
\begin{equation}\label{eq:log}
\{\re^0\log\re^0\}_{\e}\text{ is bounded in }L^{q}(\T). 
\end{equation}
which implies \eqref{eq:log1}.

Next, as shown in Remark \ref{rem:4},  $\{\nabla V_{\e}|_{t=0}\}_{\e}$ is bounded in $L^{2}(\T)$ and  we have proved that the right-hand sides of \eqref{fes-1} and \eqref{newbdestimate1} are bounded uniformly in $\e$. Then, to obtain \eqref{uniformbounds} is enough to use the bound
\begin{equation*}
\int_0^t\int_{\T} \re(\re-g_{\e}) \;dsdx \geq \frac{1}{2} \int_0^t\int_{\T} |\re|^2 \;dsdx -\frac{1}{2} \int_0^t\int_{\T} |g_{\e}|^2 \;dsdx
\end{equation*}
in the energy inequalities \eqref{fes-1} and \eqref{newbdestimate1}.
Finally, the bound on the potential follows from the Poisson equation. 
\end{proof}

The following lemma we prove the convergence needed to pass to the limit as $\e\to0$. 
\begin{lemma}\label{lem:rho}
There exist $\rho\geq 0$ such that $\rrho\in L^{\infty}(0,T);H^{1}(\T)\cap L^{2}(0,T;H^{2}(\T))$ and the following hold:
\begin{align}
	&\rho_{\epsilon} \rightarrow \rho\text{ in $C([0,T];L^{q}(\T)$ },\, q<3,\label{eq:rho1}\\
	&\rrho_{\epsilon} \weakto \rrho\text{ weakly in $L^2(0,T; H^2 (\mathbb{T}^3))$},\label{eq:rho2}\\
		&\rre\weaktos\rrho\text{ weakly* in $L^\infty(0,T; H^1 (\mathbb{T}^3))$},\label{eq:rho6}\\
	&\sqrt{\rho_{\epsilon}} \rightarrow \sqrt{\rho}\text{ strongly in }L^2(0,T;H^1(\mathbb{T}^n))\label{eq:rho5}.\\
	&\re(\log\re+1)+1\to \rho(\log\rho+1)+1\text{ strongly in }L^{1}(0,T;L^{1}(\T)),\label{eq:rhologrho}\\
	&\rho_\epsilon^{\gamma} \rightarrow \rho^{\gamma}\text{ in $L^1(0,T;L^1(\mathbb{T}^n))$}\label{eq:rho4}\\
	\end{align}
\end{lemma}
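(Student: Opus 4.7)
The strategy proceeds in three stages: extraction of weak limits, upgrading to strong compactness of $\re$ via Aubin--Lions--Simon, and then deducing the remaining convergences from the a.e.\ convergence of $\re$ combined with the available bounds from Lemma~\ref{lem:unifaprioriinitial}. First, the uniform bounds $\|\nabla\rre\|_{L^\infty_t L^2_x}$, $\|\nabla^2\rre\|_{L^2_{t,x}}$, together with conservation of mass giving $\|\re\|_{L^\infty_t L^1_x}\leq C$, yield via Banach--Alaoglu a function $r\geq 0$ and a subsequence with $\rre\weaktos r$ in $L^\infty(0,T;H^1(\T))$ and $\rre\weakto r$ in $L^2(0,T;H^2(\T))$. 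Identifying $r=\rrho$ will follow once strong compactness of $\re$ is in hand.

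The key compactness step uses Aubin--Lions--Simon. From $\nabla\re=2\rre\nabla\rre$, H\"older combined with the Sobolev embedding $H^1(\T)\hookrightarrow L^6(\T)$ gives $\re$ bounded in $L^\infty(0,T;W^{1,3/2}(\T))$ uniformly in $\e$. For the time derivative, the continuity equation factorizes as
\begin{equation*}
\d_t\re=-\dive\Bigl(\rre\cdot\tfrac{1}{\e}\rre\ue\Bigr),
\end{equation*}
and the crucial observation is that the enhanced dissipation $\tfrac{1}{\e^2}\iint\re|\ue|^2\,dsdx\leq C$ makes $\tfrac{1}{\e}\rre\ue$ uniformly bounded in $L^2_{t,x}$; H\"older against $\rre\in L^\infty_t L^6_x$ then places the flux $\tfrac{1}{\e}\re\ue$ in $L^2_t L^{3/2}_x$ uniformly in $\e$, so that $\d_t\re$ is uniformly bounded in $L^2(0,T;W^{-1,3/2}(\T))$. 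Since $W^{1,3/2}(\T)\hookrightarrow\hookrightarrow L^q(\T)$ for every $q<3$, Aubin--Lions--Simon delivers $\re\to\rho$ in $C([0,T];L^q(\T))$ for $q<3$, hence a.e.\ convergence up to a further subsequence, which yields $r=\rrho$.

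For the strong convergence $\rre\to\rrho$ in $L^2(0,T;H^1)$, I would use the elementary bound $|\sqrt{a}-\sqrt{b}|^2\leq|a-b|$ to upgrade $\re\to\rho$ in $C([0,T];L^q(\T))$ to $\rre\to\rrho$ in $C([0,T];L^{2q}(\T))$ and in particular in $L^2_{t,x}$; the torus interpolation $\|\nabla g\|_{L^2}^2\leq \|g-\bar g\|_{L^2}\|\Delta g\|_{L^2}$ applied to $g=\rre-\rrho$, together with the uniform $L^2(0,T;H^2)$ bound, then upgrades this to strong convergence of the gradients. For the strong convergences of $\re^\gamma$ and of the entropy term in $L^1_{t,x}$, I would invoke Vitali's theorem: a.e.\ convergence is already established; equi-integrability of $\re^\gamma$ follows from $\nabla\re^{\gamma/2}\in L^2_{t,x}$, hence $\re^\gamma\in L^1_t L^3_x$ by Sobolev, while equi-integrability of $\re(\log\re-1)+1$ follows from the elementary growth bound $|s\log s|^{q'}\leq C(1+s^\gamma)$ valid for some $q'>1$, combined with $\re\in L^\infty_t L^\gamma_x$.

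The principal obstacle is securing the uniform-in-$\e$ time regularity $\d_t\re\in L^2_t W^{-1,3/2}_x$: the continuity equation carries an explicit $1/\e$ factor, and the estimate only becomes clean when the flux is split as $\rre\cdot(\tfrac{1}{\e}\rre\ue)$, so that the singular scaling is fully absorbed by the enhanced dissipation term from \eqref{fes-1}. Once this scale-invariant bound is in place, the rest of the compactness chain is routine.
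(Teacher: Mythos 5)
Your proof is correct and its core coincides with the paper's: the same $L^\infty_t W^{1,3/2}_x$ bound on $\re$ from $\nabla\re=2\rre\nabla\rre$, the same absorption of the singular $1/\e$ in the continuity equation by writing the flux as $\rre\cdot(\tfrac{1}{\e}\rre\ue)$ and using the $O(\e^2)$ damping dissipation, and the same Aubin--Lions conclusion $\re\to\rho$ in $C([0,T];L^q)$, $q<3$. You diverge only in the auxiliary convergences, and mostly to your advantage: for \eqref{eq:rho5} the pointwise inequality $|\sqrt a-\sqrt b|^2\le|a-b|$ gives $\rre\to\rrho$ in $C([0,T];L^{2q}(\T))$ directly from \eqref{eq:rho1}, which is cleaner (and slightly stronger) than the paper's truncation-at-level-$M$ plus dominated convergence; the interpolation $\|\nabla g\|_{L^2}^2\le\|g-\bar g\|_{L^2}\|\Delta g\|_{L^2}$ then plays the same role as the paper's $\|f\|_{H^1}\le C\|f\|_{L^2}^{1/2}\|f\|_{H^2}^{1/2}$. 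For \eqref{eq:rho4} and \eqref{eq:rhologrho} you use Vitali where the paper uses explicit truncation, which is equivalent \emph{provided} you establish equi-integrability on the space-time cylinder. This is the one point to tighten: boundedness of $\re^\gamma$ in $L^1_tL^3_x$ alone does \emph{not} give uniform integrability on $(0,T)\times\T$ (a family concentrating on short time intervals is bounded in $L^1_tL^3_x$ but not equi-integrable); you must combine it with the energy bound $\re^\gamma\in L^\infty_tL^1_x$, which by space-time interpolation yields $\re^\gamma$ bounded in $L^{5/3}((0,T)\times\T)$ --- exactly the bound the paper derives via $\|\rho_\e^{\gamma/2}(t)\|_{L^{10/3}}\le\|\rho_\e^{\gamma/2}(t)\|_{L^2}^{2/5}\|\rho_\e^{\gamma/2}(t)\|_{L^6}^{3/5}$ --- and then Vitali applies. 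Your treatment of the entropy term via $|s\log s|^{q'}\le C(1+s^\gamma)$ is already a genuine product-space $L^{q'}$ bound, $q'>1$, so no such issue arises there.
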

\begin{proof}
From $\eqref{uniformbounds}$ and Sobolev embedding have that:
	\begin{equation}\label{eq:sob}
	\{\sqrt{\rho_{\epsilon}}\}_{\e}\text{ is bounded in } L^2(0,T;L^{q}(\T))\quad q\in[1,6]
	\end{equation}
	Therefore since $\nabla \rho_{\epsilon} =2\nabla \sqrt{\rho_{\epsilon}} \sqrt{\rho_{\epsilon}}$ 
	by using \eqref{eq:sob} and \eqref{uniformbounds} we can infer that 
	\begin{equation*}
	\{\rho_{\epsilon}\}_{\e}\mbox{ is bounded in }L^{\infty}(0,T;W^{1,\frac{3}{2}}(\T)).
	\end{equation*}
	Next, by using the weak formulation of the continuity equation and the bounds \eqref{uniformbounds} we get that 
	\begin{equation*}
	\{\partial_t\re\}_{\e}\mbox{ is bounded in }L^{2}(0,T;W^{-1,\frac{3}{2}}(\T)).
	\end{equation*}
	Indeed, it is enough to note that we have
	\begin{equation*}
	\int_0^T\|\partial_t\re\|^2_{W^{-1,\frac{3}{2}}} \;ds
	\leq  \frac{1}{\epsilon^2}\int_0^T\|\rho_\epsilon u_\epsilon\|^2_{L^\frac{3}{2}_x} \;ds
	\leq \int_0^T\|\rre\|_{L_x^{6}}^2\left\|\rre\frac{\ue}{\e}\right\|^{2}_{L^{2}_x}\;ds.
	\end{equation*}
	Since for $q<3$ we have $W^{1,3/2}(\T) \subset L^{q}(\T)$ with compact embedding and $L^{q}(\T)\subset W^{-1,3/2}(\T)$ 
	we can apply Aubin-Lion lemma to deduce that there exists a subsequence not relabelled and $\rho\geq 0$ such that \eqref{eq:rho1} holds. Moreover, by passing to a further
	subsequence if necessary, \eqref{eq:rho1} implies 
	\begin{equation}\label{eq:aerho}  
	\rho_\epsilon \rightarrow \rho \; \; \text{ a.e in }(0,T)\times\T. 
	\end{equation}
	Then, the convergence \eqref{eq:rho2} and \eqref{eq:rho4} follow from \eqref{eq:aerho}, the uniform bounds \eqref{uniformbounds} and standard weak compactness considerations.
	Next, to prove \eqref{eq:rho5} we start by proving that 
	\begin{equation}\label{eq:strongrrho2}
	\rre\to\rrho\text{ strongly in }L^{2}(0,T;L^{2}(\T)). 
	\end{equation}
	First, notice that  \eqref{eq:aerho} implies that
	\begin{equation*}\sqrt{\rho_{\epsilon}} \rightarrow \sqrt{\rho} \; \; \text{ a.e in (0,T) $\times \mathbb{T}^3$}.\end{equation*}
	Let $M>0$, then
\begin{equation*}
\begin{split}
\int_0^T \int_{\mathbb{T}^3}|\sqrt{\rho_{\epsilon}}  - \sqrt{\rho}|^2 \;dsdx \leq & \int_0^T \int_{\{\rho_{\epsilon}> M\}} |\sqrt{\rho_{\epsilon}}|^2 \;dsdx + \\
&\int_0^T \int_{\mathbb{T}^3}  |\sqrt{\rho_{\epsilon}} \chi_{\{\rho_{\epsilon}\leq M\}} - \sqrt{\rho} \chi_{\{\rho \leq M\}}|^2 \;dsdx \; + \\ & \int_0^T \int_{\{\rho> M\}} |\sqrt{\rho}|^2 \;dsdx.
\end{split}
\end{equation*}
Then, by using \eqref{uniformbounds}, \eqref{eq:sob}, the fact that $\rrho\in L^{\infty}(0,T;L^{6}(\T))$, 
we have that
\begin{equation*}
\begin{split}
\int_0^T \int_{\mathbb{T}^3}|\sqrt{\rho_{\epsilon}}  - \sqrt{\rho}|^2 \;dsdx \leq &\frac{1}{M^2} \int_0^T \int |\sqrt{\rho_{\epsilon}}|^6 \;dsdx \\
&+\int_0^T \int_{\mathbb{T}^3}  |\sqrt{\rho_{\epsilon}} \chi_{\{\rho_{\epsilon} \leq M\}} - \sqrt{\rho} \chi_{\{\rho\leq M\}}|^2 \;dsdx \;\\ & +\frac{1}{M^2}\int_0^T \int |\sqrt{\rho}|^6 \;dsdx\\
&\leq\int_0^T \int_{\mathbb{T}^3}  |\sqrt{\rho_{\epsilon}} \chi_{\{\sqrt{\rho_{\epsilon}} \leq M\}} - \sqrt{\rho} \chi_{\{\sqrt{\rho} \leq M\}}|^2\;dsdx+\frac{2C}{M^2}
\end{split}
\end{equation*}
Then, we conclude by first sending $\e\to 0$ in the second term, where we use Dominated Convergence Theorem, and then by choosing suitably $M\to\infty$.

The strong convergence \eqref{eq:rho5} of $\rre$ in $L^{2}(0,T;H^{1}(\T))$ is a consequence of the following simple interpolation inequality: 
\begin{equation}\label{interpol}
\|\rre(t)-\rrho(t)\|_{H^1}\leq C\|\rre(t)-\rrho(t)\|_{L^2}^{\frac{1}{2}}\|\rre(t)-\rrho(t)\|_{H^{2}}^{\frac{1}{2}}.
\end{equation}

Next, we prove \eqref{eq:rhologrho}. We first notice that by using \eqref{uniformbounds} and by the very same argument used to deduce \eqref{eq:log}, we easily have that for some $p>1$
\begin{equation*}
\begin{aligned}
&\rho\log\rho\in L^{p}((0,T)\times\T),\\
&\{\re\log\re\}_{\e}\text{ is bounded in }L^{p}((0,T)\times\T). 
\end{aligned}
\end{equation*}
Moreover, since the function $s\to s\log\,s$ is continuous on $[0,\infty)$ we have that 
\begin{equation*}
\re\log\re\to\rho\log\rho\text{ a.e. in }(0,T)\times\T. 
\end{equation*}
Let $M>0$, then
\begin{equation*}
\begin{split}
\int_0^T \int_{\mathbb{T}^3}|\re\log\re-\rho\log\rho| \;dsdx \leq & \int_0^T \int_{\{\rho_{\epsilon}> M\}} |\re\log\re| \;dsdx\\
&+\int_0^T \int_{\mathbb{T}^3}  |\re\log\re\chi_{\{\rho_{\epsilon} \leq M\}} - \rho\log\rho \chi_{\{\rho \leq M\}}| \;dsdx \; \\ &+ \int_0^T \int_{\{\rho> M\}} |\rho\log\rho|\;dsdx.\\
&\leq\int_0^T \int_{\mathbb{T}^3}  |\re\log\re\chi_{\{\rho_{\epsilon} \leq M\}} - \rho\log\rho \chi_{\{\rho \leq M\}}|\;dsdx \; \\ 
&+\frac{2C}{(M\log M)^{p-1}}
\end{split}
\end{equation*}
and we conclude as in before. 
Finally, we prove the convergence of the pressure term. We first note that  from \eqref{uniformbounds} we have that 
	\begin{equation*}
	\{\re^{\frac{\gamma}{2}}\}_{\e}\mbox{ is bounded in }L^{\infty}(0,T;L^{2}(\T))\cap L^{2}(0,T;H^{1}(\T)).
	\end{equation*}
	Then, by Sobolev embedding
	$$
	\{\rho_\e^{\frac{\gamma}{2}}\}_{\e}\text{ is bounded in } L^2(0,T;L^{6}(\T))
	$$ 
	and therefore  for a.e $t$:
	\begin{equation*}
	\|\rho_\e^{\frac{\gamma}{2}}(t)\|_{\frac{10}{3}} \leq \|\rho_\e^{\frac{\gamma}{2}}(t)\|^{\frac{2}{5}}_{2}\| \rho_\e^{\frac{\gamma}{2}}(t)\|_{6}^{\frac{3}{5}}.
	\end{equation*}
Therefore, by integrating in time and using \eqref{uniformbounds} we have that
	\begin{equation*}
	\{\re^{\frac{\gamma}{2}}\}_{\e}\mbox{ is bounded in }L^{\frac{10}{3}}((0,T)\times\T),
	\end{equation*}
	which is equivalent to say that 
	\begin{equation*}
	\{\re^{\gamma}\}_{\e}\mbox{ is bounded in }L^{\frac{5}{3}}((0,T)\times\T).
	\end{equation*}
	Moreover, by \eqref{eq:aerho} we have that also $\re^{\gamma}\rightarrow\rho^{\gamma}$ a.e. in $(0,T)\times\T$ and by Fatou Lemma we have that 
	\begin{equation*}
	\rho^{\gamma}\in L^{\frac{5}{3}}((0,T)\times\T).
	\end{equation*}
	Then, if $M>0$ we have 
\begin{equation*}
\begin{split}
\int_0^T \int_{\mathbb{T}^3}|\re^{\gamma}  - \rho^{\gamma}| \;dsdx \leq & \int_0^T \int_{\{\rho_{\epsilon}> M\}} \re^\gamma \;dsdx\\
+&\int_0^T \int_{\mathbb{T}^3}  |\re^{\gamma}\chi_{\{\rho_{\epsilon}\leq M\}} -\rho^\gamma \chi_{\{\rho \leq M\}}| \;dsdx \; \\ +& \int_0^T \int_{\{\rho> M\}} \rho^{\gamma} \;dsdx\\
\leq &\int_0^T \int_{\mathbb{T}^3}  |\re^{\gamma}\chi_{\{\rho_{\epsilon}\leq M\}} -\rho^\gamma \chi_{\{\rho \leq M\}}|\;dsdx\\
+&\frac{2C}{M^{\frac{2}{3}\gamma}}
\end{split}
\end{equation*}
and we conclude as before. \\


\end{proof} 

\subsection{Proof of Theorem \ref{teo:weakweak}}
First, we notice that by \eqref{uniformbounds}, \eqref{eq:rho1} and the compact embedding of $L^{2}(\T)$ in $H^{-1}(\T)$ for $\{g_\e\}_\e$, we have that there exists $V\in C([0,T);H^{2}(\T))$ such that 
\begin{equation}\label{eq:convv}
\nabla V_{\e}\rightarrow \nabla V\mbox{ strongly in }C([0,T);L^{2}(\T)), 
\end{equation}
and the Poisson equation is satisfied pointwise.

Regarding the momentum equation, let $\psi\in C^{\infty}([0,T);C^{\infty}(\T))$ and consider the weak formulation of the momentum equations in Definition \ref{WSQNS}, multiplied by $\e$, 
\begin{equation}\label{eq:p1}
\begin{aligned}
&\e\int\re^0\ue^{0}\psi(0)\,dx +\e^2\iint\rre\rre \frac{\ue}{\e}\psi_t \;dsdx +\e^2\iint\rre \frac{\ue}{\e} \otimes\rre\frac{\ue}{\e}:\nabla\psi \;dsdx \\
&-\sqrt{\epsilon}\iint\rre\frac{\MT_{\e}^{s}}{\sqrt{\epsilon}}:\nabla\psi \;dsdx +\iint\re^{\gamma}\dive\psi \;dsdx -\iint 2\rre\,\nabla^2\rre:\nabla\psi \;dsdx\\
&- \iint \re \nabla V_\e \psi \;dsdx +\iint 2\nabla\rre\otimes\nabla\rre\,:\nabla\psi \;dsdx+\iint\re\frac{\ue}{\e}\psi \;dsdx=0.
\end{aligned}
\end{equation}
We study the convergence in the limit of $\e\to0$ of all the terms separately. 
By using \eqref{eq:idn} and H\"older inequality we conclude
\begin{equation*}
\epsilon \left|\int \rho_\epsilon^0 u_\epsilon^0 \psi(0)\,dx\right| \leq \epsilon\,\|\psi\|_{L^{\infty}_{t,x}} \|\sqrt{\re^0}\|_{L^{2}} \|\sqrt{\rho^0_{\epsilon}} u^0_\epsilon\|_{L^2} \leq \epsilon\, C \rightarrow 0 \text{ as $\epsilon \rightarrow 0 $}.
\end{equation*}
Analogously, from  \eqref{uniformbounds} and H\"older inequality, we  get  for $\epsilon \rightarrow 0 $:
\begin{equation*} 
\begin{aligned}
&\epsilon^2 \left|\iint \sqrt{\rho_\epsilon} \sqrt{\rho_\epsilon} \frac{u_\epsilon}{\e} \psi_t \;dsdx \right| \leq \epsilon^2 \|\psi_t\|_{L^{\infty}_{t,x}}\|\sqrt{\rho_{\epsilon}}\|_{L^2_{t,x}}\left\|\sqrt{\rho_\epsilon}\,\frac{u_\epsilon}{\e}\right\|_{L^2_{t,x}} \leq \epsilon^2 C \rightarrow 0,\\
&\epsilon^2 \left|\iint\sqrt{\rho_\epsilon}\,\frac{u_\epsilon}{\e} \otimes \sqrt{\rho_\epsilon}\, \frac{u_\epsilon}{\e}:  \nabla \psi \;dsdx \right| \leq \epsilon^2 \|\nabla \psi\|_{L^{\infty}_{t,x}} \left\|\sqrt{\rho_\epsilon}\,\frac{u_\epsilon}{\e}\right\|^2_{L^2_{t,x}} \leq \epsilon^2 C \rightarrow 0, \\
&\sqrt{\epsilon}\left|\iint \sqrt{\rho_\epsilon} \frac{\MT_{\e}^{s}}{\sqrt{\epsilon}} : \nabla \psi \;dsdx \right| \leq \sqrt{\epsilon}\|\nabla \psi\|_{L^{\infty}_{t,x}}\|\sqrt{\rho_{\epsilon}}\|_{L^2_{t,x}}\left\|\frac{\MT_{\e}^{s}}{\sqrt{\epsilon}}\right\|_{L^2_{t,x}} \leq \sqrt{\epsilon}C  \rightarrow 0.\\
\end{aligned}
\end{equation*}
Next, by using 
\eqref{eq:rho2} and
\eqref{eq:rho5} 
it follows that for $\epsilon \rightarrow 0 $
\begin{align*}
&\iint 2\sqrt{\rho_\epsilon} \nabla^2 \sqrt{\rho_\epsilon} : \nabla \psi\;dsdx \rightarrow\iint2\sqrt{\rho} \nabla^2 \sqrt{\rho} : \nabla \psi \;dsdx,  
\\
&
\iint 2\nabla \sqrt{\rho_\epsilon} \otimes \nabla\sqrt{\rho_\epsilon} : \nabla \psi \;dsdx \rightarrow \iint2\nabla \sqrt{\rho} \otimes \nabla\sqrt{\rho} : \nabla \psi \;dsdx.
\end{align*}
Moreover, the convergence of $\rho_{\epsilon}^{\gamma}$ in \eqref{eq:rho4} implies the continuity of the pressure term
\begin{equation*}
\iint \rho_{\epsilon}^{\gamma} \dive \psi \;dsdx \rightarrow \iint \rho^{\gamma} \dive \psi \;dsdx
\end{equation*}
as $\epsilon \rightarrow 0 $.
Next, we consider the potential term, by using the \eqref{eq:rho1} and \eqref{eq:convv} one gets:
\begin{equation*}
    \iint \re \nabla V_\e \psi \;dsdx \rightarrow \iint \rho \nabla V \psi \;dsdx \text{ as } \e \rightarrow 0.
    \end{equation*}
For  the damping term, we first note that, that from \eqref{uniformbounds}, we can infer that there exists $\Lambda$ such that 
\begin{equation}
\rre\,\frac{\ue}{\e}\weakto\Lambda\text{ weakly in $L^2((0,T)\times\T)$},\label{eq:rho7}
\end{equation}
 by using also \eqref{eq:rho5} we get that 
\begin{equation*}
\iint  \rre \rre\,\frac{u _\epsilon}{\e} \psi \;dsdx \rightarrow \iint \sqrt{\rho} \Lambda \psi \;dsdx \,\text{ as }\epsilon \rightarrow 0 
\end{equation*} 
Therefore from \eqref{eq:p1} we conclude
\begin{equation*}
\begin{split}
\iint \sqrt{\rho} \Lambda \psi \;dsdx = & -2\iint \nabla \sqrt{\rho} \otimes \sqrt{\rho} : \nabla \psi \;dsdx + 2\iint \sqrt{\rho} \nabla^2 \sqrt{\rho} : \nabla \psi \;dsdx \\
&- \iint \rho^{\gamma} \dive \psi \;dsdx + \iint \rho \nabla V \psi \;dsdx,
\end{split}
\end{equation*} 
that is
\begin{equation}\label{eq:lambda}
\sqrt{\rho} \Lambda = \dive (-2 \nabla \sqrt{\rho} \otimes \sqrt{\rho} + 2 \sqrt{\rho}  \nabla^2 \sqrt{\rho} - \rho^{\gamma} \mathbb{I}) + \rho \nabla V \quad \text{in } \mathcal{D}'((0,T)\times\T).
\end{equation}
Finally, for the continuity equation we similarly have for $\e\to 0$:
\begin{equation*}
\int \rho_\epsilon^0 \phi(0)\,dx + \iint \rho_\epsilon \phi_t + \sqrt{\rho_\epsilon} \sqrt{\rho_\epsilon} u_\epsilon \nabla \phi \;dsdx \to \int\rho^0 \phi(0) \; dx + \iint \rho \phi_t + \sqrt{\rho} \Lambda \nabla \phi \;dsdx,
\end{equation*}
and therefore taking into account \eqref{eq:lambda} we get that $\rho$ satisfies 
\begin{equation}\label{eq:wfbis}
\iint \rho \phi_t + \sqrt{\rho} \Lambda \nabla \phi \;dsdx =0,
\end{equation}
or any $\phi\in C^{\infty}_{c}([0,T);C^{\infty}(\T))$. 
Next, we prove \eqref{eq:idens}. Again, from \eqref{uniformbounds} we have that there exists $\mathcal{S}$ such that 
\begin{equation}
\mathcal{S}_{\e}\weakto\mathcal{S}\text{ weakly in $L^2((0,T)\times\T)$},\label{eq:rho8}
\end{equation}
Therefore, for any $\phi\in C^{\infty}(\T)$, we have that 
\begin{equation*}
\begin{aligned}
&\iint\rre\mathcal{S}_{\e}\phi \;dsdx \to \iint\rrho\mathcal{S}\phi \;dsdx \\
&\iint \rre\nabla^2\rre\phi-\nabla\rre\otimes\nabla\rre \;dsdx \to \iint \rrho\nabla^2\rrho\phi-\nabla\rrho\otimes\nabla\rrho \;dsdx
\end{aligned}
\end{equation*}
where we have used \eqref{eq:rho5}, \eqref{eq:rho2} and \eqref{eq:rho8}. Finally, by using \eqref{eq:dispdiss} we get \eqref{eq:idens}. 
Next, we prove the entropy inequalities. By lower semicontinuity we have that for a.e. $t\in(0,T)$
\begin{equation}\label{eq:ene}
\begin{aligned}
&\int_{\T} \left(\frac{\rho(t,x)^\gamma}{\gamma - 1}+2 |\nabla \sqrt{\rho}(t,x)|^2 + \frac{1}{2}|\nabla V(t,x)|^2 \right)\,dx+ \int_0^{t}\int_{\T} |\Lambda(t,x)|^2\;dsdx  \\
	& \leq\liminf_{\e\to 0} \int_{\T} \left(  \frac{1}{2} \re^0|\ue^0|^2 + \frac{({\re^{0})}^{\gamma}}{\gamma - 1} +  2 |\nabla \sqrt{\re^0}|^2 + \frac{1}{2}|\nabla V_\e^0|^2 \right)\;dx, 
\end{aligned}
\end{equation}
and then \eqref{eq:form11} and \eqref{eq:form23} follows by using \eqref{eq:idn} and \eqref{eq:convid}, respectively, and \eqref{eq:convv}.

Finally, regarding \eqref{eq:form22}, we recall that we only assume \eqref{eq:idn}. We first note that \eqref{eq:idn} implies that, up to a subsequence, 
\begin{equation*}
\re^0\to\rho^0\,\text{ a.e. in }(0,T)\times\T
\end{equation*}
then by using \eqref{eq:log1} and the very same argument used in Lemma \ref{lem:rho} to prove \eqref{eq:rhologrho} we get that 
\begin{equation*}
\begin{aligned}
&\re^0(\log\re^0+1)+1\to \rho^0(\log\rho^0+1)+1\text{ strongly in }L^{1}(\T). 
\end{aligned}
\end{equation*}
Moreover, we have that
\begin{equation*}
    \lim_{\e \rightarrow 0} \int_0^t \int_{\T} \re(\re-g_\e) \;dsdx = \lim_{\e \rightarrow 0} \int_0^t \int_{\T} \re^2 \;dsdx - \lim_{\e \rightarrow 0} \int_0^t \int_{\T} \re g_\e \;dsdx = \int_0^t \int_{\T} \rho(\rho-g) \;dsdx,
\end{equation*}
but this follows directly from \eqref{eq:rho1} and the weak convergence of $g_\e$. 
Therefore considering \eqref{newbdestimate1} and arguing exactly as done to deduce \eqref{eq:ene} we get \eqref{eq:form22}.

\section*{Appendix A}
For completeness, we give the proof of \eqref{nablalogrho}. 
\begin{thm}
There exists $C>0$, depending only on the dimension, such that for any function $\rho\in H^{2}(\T)$, with $\rho>0$ a.e. on $(0,T)\times\T$
 		\begin{equation*}
		\iint|\nabla\rho^{\frac{1}{4}}|^4+\iint|\nabla^2\rrho|^2\leq C \iint\rho|\nabla^2\log\rho|^2.
		\end{equation*}
		\end{thm}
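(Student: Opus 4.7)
The plan is to derive the estimate from two ingredients: a pointwise algebraic identity relating the Hessian of $\rrho$ to $\rho\nabla^2\log\rho$, and an integration-by-parts bound for $\iint|\nabla\rho^{1/4}|^4$ in terms of $\iint\rho|\nabla^2\log\rho|^2$. Throughout, I shall argue formally under the assumption that $\rho$ is smooth and strictly positive, and then recover the general case by approximation (for instance replacing $\rho$ by $\rho\ast\eta_\delta+\delta$ and passing to the $\liminf$ as $\delta\to0$, using Fatou on the left and strong/weak convergence of the regularized quantities on the right); since the inequality is trivial when the right-hand side is infinite, we may assume it is finite in what follows.

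\emph{Step 1: pointwise identity and bound on $|\nabla^2\rrho|^2$.} Direct differentiation yields $\nabla^2\log\rho=\rho^{-1}\nabla^2\rho-\rho^{-2}\nabla\rho\otimes\nabla\rho$ and $2\rrho\nabla^2\rrho=\nabla^2\rho-\tfrac{1}{2\rho}\nabla\rho\otimes\nabla\rho$. Subtracting and using $\nabla\rrho\otimes\nabla\rrho=\tfrac{1}{4\rho}\nabla\rho\otimes\nabla\rho$ produces the key identity
\[
2\rrho\,\nabla^2\rrho=\rho\nabla^2\log\rho+2\nabla\rrho\otimes\nabla\rrho.
\]
Squaring, using Young's inequality, and dividing by $\rho$ gives pointwise
\[
|\nabla^2\rrho|^2\le\tfrac12\rho|\nabla^2\log\rho|^2+\tfrac{2|\nabla\rrho|^4}{\rho},
\]
and since $|\nabla\rrho|^4/\rho=|\nabla\rho|^4/(16\rho^3)=16|\nabla\rho^{1/4}|^4$, integration produces
\[
\iint|\nabla^2\rrho|^2\le\tfrac12\iint\rho|\nabla^2\log\rho|^2+32\iint|\nabla\rho^{1/4}|^4.
\]

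\emph{Step 2: control of $\iint|\nabla\rho^{1/4}|^4$.} Set $v:=\nabla\log\rho$, so that $256|\nabla\rho^{1/4}|^4=\rho|v|^4$ and $|\nabla v|^2=|\nabla^2\log\rho|^2$. Using $\partial_l\rho=\rho v_l$ and integrating by parts in $x$ (with no boundary contribution thanks to the torus),
\[
\iint\rho|v|^4=\iint|v|^2 v_l\partial_l\rho=-\iint\rho\bigl(2v_kv_l\partial_lv_k+|v|^2\,\mathrm{tr}(\nabla v)\bigr).
\]
The elementary pointwise bounds $|v_kv_l\partial_lv_k|\le|v|^2|\nabla v|$ and $|\mathrm{tr}(\nabla v)|\le\sqrt{n}\,|\nabla v|$, followed by Cauchy--Schwarz, give
\[
\iint\rho|v|^4\le(2+\sqrt{n})\Bigl(\iint\rho|v|^4\Bigr)^{1/2}\Bigl(\iint\rho|\nabla v|^2\Bigr)^{1/2},
\]
so that $\iint\rho|v|^4\le(2+\sqrt{n})^2\iint\rho|\nabla^2\log\rho|^2$, which in turn bounds $\iint|\nabla\rho^{1/4}|^4$ by a dimensional constant times $\iint\rho|\nabla^2\log\rho|^2$. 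Combining this with the estimate from Step~1 yields the desired inequality for some absolute constant $C$.

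The main obstacle I anticipate is the rigorous justification of the integration by parts in Step~2 and of the pointwise manipulations in Step~1 when $\rho$ is only in $H^2$ with $\rho>0$ a.e.\ (so $\rho$ may approach $0$ and $\log\rho$ may be highly singular). The cleanest fix is to perform all computations for the regularization $\rho_\delta=\rho\ast\eta_\delta+\delta$, which is smooth and bounded below by $\delta>0$, for which every step above is immediate, and then let $\delta\to0$; the left-hand sides pass to the limit by weak lower semicontinuity of the $L^2$ and $L^4$ norms, while on the right-hand side $\rho_\delta|\nabla^2\log\rho_\delta|^2\to\rho|\nabla^2\log\rho|^2$ a.e.\ and Fatou's lemma provides the required inequality (or, again, if the limit right-hand side is $+\infty$ there is nothing to prove).
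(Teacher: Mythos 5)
Your argument is correct and rests on the same two pillars as the paper's proof: the algebraic identity $\rho\nabla^2\log\rho = 2\sqrt{\rho}\,\nabla^2\sqrt{\rho}-2\nabla\sqrt{\rho}\otimes\nabla\sqrt{\rho}$ and an integration by parts that trades the quartic gradient term for $\iint\rho|\nabla^2\log\rho|^2$. The organization differs mildly: the paper expands $\iint\rho\,\bigl|\nabla\bigl(\nabla\sqrt{\rho}/\sqrt{\rho}\bigr)\bigr|^2=\tfrac14\iint\rho|\nabla^2\log\rho|^2$ into three terms and absorbs the cross term by Young's inequality after integrating it by parts, whereas you first establish the self-contained bound $\iint\rho|\nabla\log\rho|^4\le(2+\sqrt{n})^2\iint\rho|\nabla^2\log\rho|^2$ and then feed it into a pointwise Young estimate for $|\nabla^2\sqrt{\rho}|^2$. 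Your modular version is marginally cleaner and makes the dimensional constant explicit, but the analytic content is essentially the same. One caveat on your closing paragraph: Fatou's lemma applied to $\rho_\delta|\nabla^2\log\rho_\delta|^2$ controls the limiting right-hand side from \emph{below}, i.e.\ it yields $\iint\rho|\nabla^2\log\rho|^2\le\liminf_\delta\iint\rho_\delta|\nabla^2\log\rho_\delta|^2$, which is the wrong direction for passing the inequality to the limit; to close the approximation one needs $\limsup_\delta\iint\rho_\delta|\nabla^2\log\rho_\delta|^2\le\iint\rho|\nabla^2\log\rho|^2$ (or actual convergence), which is not automatic when $\rho$ vanishes. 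The paper's own ``by a density argument'' is no more detailed, and in the body of the paper the inequality is only ever invoked for approximate solutions for which $\sqrt{\rho}\,\nabla^2\log\rho$ is genuinely defined (see Remark \ref{rem:2}), so this does not affect how the lemma is used; but as a free-standing statement for $\rho\in H^2$ with $\rho>0$ a.e., your limiting step, like the paper's, is left incomplete.
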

		\begin{proof}
By a density argument it is enough to prove the lemma for $\rho$ being a smooth function strictly positive everywhere. We first notice that
	\begin{align}\label{1}
	\iint \rho \left|\nabla\left(\frac{\nabla \sqrt{\rho}}{\sqrt{\rho}} \right)\right|^2 = \iint \rho\left|- \frac{1}{2}\nabla \log \rho \otimes \nabla \log \rho +  \frac{1}{2\rho} \nabla^2 \rho \right|^2 & = \frac{1}{4}\iint \rho |\nabla^2\log \rho|^2.
	\end{align}
	On the other hand we also have
	\begin{align}\label{2}
	& \iint \rho \left|\nabla\left(\frac{\nabla \sqrt{\rho}}{\sqrt{\rho}} \right)\right|^2 = \iint \frac{1}{\rho}|\nabla \sqrt{\rho}|^4 + |\nabla^2 \sqrt{\rho}|^2 - 2 \frac{1}{\sqrt{\rho}} \nabla^2 \sqrt{\rho} :  \nabla \sqrt{\rho} \otimes \nabla \sqrt{\rho}.
	\end{align}
	We have:
	\begin{align*}
	& \iint \frac{1}{\sqrt{\rho}} \partial_{x_i}\partial_{x_j} \sqrt{\rho}   \partial_{x_i} \sqrt{\rho}  \partial_{x_j} \sqrt{\rho} = \iint \partial_{x_i}\left(\partial_{x_j}\sqrt{\rho} \frac{\partial_{x_i}\sqrt{\rho}}{\sqrt{\rho}}  \partial_{x_j}\sqrt{\rho}\right)  
	\\
	&- \iint \partial_{x_j}\sqrt{\rho} \partial_{x_i} \left( \frac{\partial_{x_i}\sqrt{\rho}}{\sqrt{\rho}} \right) \partial_{x_j}\sqrt{\rho} - \iint \frac{1}{\sqrt{\rho}} \partial_{x_i}\partial_{x_j} \sqrt{\rho} \partial_{x_i} \sqrt{\rho}  \partial_{x_j} \sqrt{\rho}.
	\end{align*}
	The first term is zero and thus we get
	\begin{align*}
	2 \iint \frac{1}{\sqrt{\rho}} \nabla^2 \sqrt{\rho} : \nabla \sqrt{\rho}\otimes \nabla \sqrt{\rho}  = - \iint |\nabla \sqrt{\rho}|^2 \dive \left( \frac{\nabla\sqrt{\rho}}{\sqrt{\rho}} \right). 
	\end{align*}
	Then, we use Young inequality
	\begin{align*}
	2 \left |\iint \frac{1}{\sqrt{\rho}} \nabla^2 \sqrt{\rho} :  \nabla \sqrt{\rho} \otimes \nabla \sqrt{\rho} \right | \leq 	  \frac{1}{2}\iint \frac{|\nabla \sqrt{\rho}|^4}{\rho} +   2\iint \rho  
	\left |\dive \left( \frac{\nabla\sqrt{\rho}}{\sqrt{\rho}} \right)\right |^2
	\\
	\leq  \frac{1}{2} \iint \frac{|\nabla \sqrt{\rho}|^4}{\rho} +  2 \iint 
	\rho \left|\nabla\left(\frac{\nabla \sqrt{\rho}}{\sqrt{\rho}} \right)\right|^2,
	\end{align*}
	and finally we get, using \eqref{1} and \eqref{2}:
	\begin{align*}
	\iint \frac{|\nabla \sqrt{\rho}|^4}{\rho} + |\nabla^2 \sqrt{\rho}|^2 \leq C \iint \rho |\nabla^2 \log \rho|^2
	\end{align*}
	that gives \eqref{nablalogrho}, being 
	\begin{equation*}
	\iint |\nabla \rho^{\frac{1}{4}}|^4 =  \iint \frac{|\nabla \sqrt{\rho}|^4}{\rho}.
	\end{equation*}
\end{proof}

\end{document}